\newtheorem{theorem}{Theorem}[section]
\newtheorem{lemma}[theorem]{Lemma}
\newtheorem{proposition}[theorem]{Proposition}
\newtheorem{corollary}[theorem]{Corollary}
\theoremstyle{definition}
\newtheorem{definition}{Definition}
\newtheorem{remark}{Remark}
\newtheorem{conjecture}{Conjecture}
\newtheorem{problem}{Problem}
\newcommand{\ux}{\boldsymbol{\xi}}
\newcommand{\om}{\omega}
\newcommand{\wo}{\widehat{\omega}}
\newcommand{\Rm}{\mathbb{R}^m}
\newcommand{\R}{\mathbb{R}}
\newcommand{\w}{ \bold{w} }
\newcommand{\vv}{ \bold{v} }
\newcommand{\pp}{ \bold{p} }
\newcommand{\qq}{ \bold{q} }
\newcommand{\ttt}{ \bold{t} }
\newcommand{\x}{ \bold{x}  }
\begin{document}

\title[Dirichlet is not just bad and singular
in a class of fractals]{Dirichlet is not just bad and singular
	in many rational IFS fractals}

\author{Johannes Schleischitz}

\thanks{Middle East Technical University, Northern Cyprus Campus, Kalkanli, G\"uzelyurt \\
    johannes@metu.edu.tr ; jschleischitz@outlook.com}


\begin{abstract}
	For $m\ge 2$, consider $K$ the
   $m$-fold Cartesian product of the limit set of an IFS of two affine maps with rational coefficients. If the contraction rates of the IFS are reciprocals of integers, and $K$ does not degenerate to singleton,
   we construct vectors in $K$ that lie within the ``folklore set'' as defined by Beresnevich et al., meaning they are Dirichlet
   improvable but not singular or badly approximable (in fact 
   our examples are Liouville vectors). 
   We further address the topic of lower bounds for the 
   Hausdorff and packing dimension of these folklore sets within $K$, however we do not compute bounds explicitly. 
   Our class of fractals extends
   (Cartesian products of) classical missing digit fractals, for which analogous 
   results had recently been obtained. 
\end{abstract}

\maketitle

{\footnotesize{

{\em Keywords}: Dirichlet spectrum, Cantor set, iterated function system\\
Math Subject Classification 2020: 11J06, 11J13, 28A80}}


\section{Introduction and main results}  \label{s1.1}

\subsection{The ``folklore set'' within fractals}
We start by defining classes of real vectors in $\Rm$ 
according to their properties regarding rational approximation.
Denote by $\Vert \ux\Vert$ the distance of $\ux\in\R^m$ to the nearest integer vector with respect to the maximum norm. Following Davenport and Schmidt~\cite{ds},
we call $\ux\in\Rm$ Dirichlet improvable if 
for some $c\in(0,1)$ the system
\begin{equation}  \label{eq:eins}
1\leq q\le Q, \qquad \Vert q\ux\Vert < cQ^{-1/m}
\end{equation}
has an integer solution $q$ for all large $Q$. The terminology is rooted in Dirichlet's Theorem, which asserts 
that \eqref{eq:eins} is soluble for $c=1$ for any $\ux\in\Rm$.
We call $\ux$ singular if \eqref{eq:eins} has a solution
for arbitrarily small $c>0$ and $Q\ge Q_0(c)$.
We say $\ux$ is badly approximable if \eqref{eq:eins} has no solution
for all $Q$ and some $c>0$, or equivalently for some $c^{\ast}>0$
the estimate
\begin{equation}  \label{eq:ordappr}
 \Vert q\ux\Vert > c^{\ast} q^{-1/m}
\end{equation}
holds for any integer $q>0$. The setup \eqref{eq:eins} is usually called uniform approximation, whereas \eqref{eq:ordappr} is 
on ordinary approximation. We denote these classes of vectors, which are central objects of investigation in Diophantine approximation, by $Di_m, Sing_m$ and $Bad_m$ respectively. If $m=1$, then $Sing_1=\mathbb{Q}$ easily follows from an observation of Khintchine~\cite{khint},
and Davenport and Schmidt~\cite{sd} showed $Di_1=Bad_1$.
Let us now assume $m\geq 2$.
It is clear that $Sing_m\cap Bad_m=\emptyset$ and $Sing_m\subseteq Di_m$, on the other hand again Davenport and Schmidt~\cite{ds} showed that $Bad_m\subseteq Di_m$ as well.

These relations motivate to study the ``folklore set''
as defined in~\cite{beretc} (see also~\cite{marnat, js}) 
\[
\mathbf{FS}_m= Di_m\setminus (Sing_m\cup Bad_m).
\]
The main result of~\cite{beretc} constitutes that $\mathbf{FS}_m\ne \emptyset$
for $m\ge 2$.
The author provided a very different, constructive 
proof in~\cite{js}.
An advantage of the latter method
relevant to us in this paper is that the coordinates of $\ux$
may be chosen in classical missing digit
fractals, like the Cantor middle-third set, see the remarks below Theorem~\ref{1} below.
This paper aims to extend this result to a more general 
class of fractals.

We study classical fractals given as the attractor of an
iterated function system (IFS), see \S~\ref{IFS} below for details.
Fishman and Simmons~\cite{fs} (see also the subsequent
papers~\cite{erg, zh}) studied 
IFS of $J\ge 2$ affine
maps with rational coefficients, i.e.
\begin{equation} \label{eq:dDORN}
f_i(x)= c_i x+ d_i, \qquad 1\leq i\leq J, \; c_i\in(-1,1)\cap\mathbb{Q},\; d_i\in\mathbb{Q}.
\end{equation}
For our results below, we may
restrict to $J=2$ a fortiori, which we will henceforth assume
and write $f=f_1, g=f_2$. We can further assume 
that $f$ and $g$ have different fixed points, which translates into
\begin{equation}  \label{eq:4s} 
\frac{d_1}{1-c_1} \ne \frac{d_2}{1-c_2}.
\end{equation}
Indeed, otherwise the attractor degenerates to just this common fixed
point, a singleton.

\subsection{Our main result and a conjecture}

Given the results and proof strategy of~\cite{js}, it is plausible that the very mild,
necessary assumption \eqref{eq:4s}
suffices for 
the attractor of any IFS as in \eqref{eq:dDORN}
to contain vectors in $\mathbf{FS}_m$. Moreover, we believe 
that the property of not being 
badly approximable within the definition of $\mathbf{FS}_m$ can be considerably sharpened. Recall $\ux\in \Rm\setminus \mathbb{Q}^m$ is called Liouville vector if for arbitrarily large $t$ there is an integer solution to the estimate
\[
\Vert q\ux\Vert \leq q^{-t}.
\]
In the sequel, throughout we assume
\[
m\geq 2, \qquad K=C^m 
\]
where $C$ is the attractor of an IFS as above, 
and will identify the IFS with $C$, or $K$, when this is convenient. 
We would like to show the following idealistic claim.

\begin{conjecture}  \label{CC}
	For an IFS as in \eqref{eq:dDORN} satisfying \eqref{eq:4s}, there exist uncountably many Liouville vectors 
	$\ux\in K$ that are
	Dirichlet improvable but not singular. In particular
	\[
	\mathbf{FS}_m\cap K\neq \emptyset.
	\]
\end{conjecture}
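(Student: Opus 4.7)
The plan is to adapt the explicit construction of \cite{js} from the missing-digit setting to the symbolic coding supplied by the IFS. Fix the coding map $\pi\colon\{1,2\}^{\mathbb{N}}\to C$ sending $\om=\om_1\om_2\cdots$ to $\lim_{n\to\infty} f_{\om_1}\circ\cdots\circ f_{\om_n}(x_0)$. For any finite word $w=\om_1\cdots\om_n$ the composition $f_w$ is an affine contraction $x\mapsto C_w x+D_w$ with $C_w=\prod_j c_{\om_j}$ and $D_w\in\mathbb{Q}$, so it has a unique rational fixed point $r_w=D_w/(1-C_w)$; iterating $f_w$ on any point of $C$ shows $r_w\in C$. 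Under \eqref{eq:4s}, these rational points are dense in $C$ and each $\pi(\om)$ is approximated by $r_{\om_1\cdots\om_n}$ with error $\ll|C_{\om_1\cdots\om_n}|$. This rational-point machinery will replace the truncated $b$-ary expansions used in \cite{js}.

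To build $\ux\in K=C^m$, I would work in stages indexed by $k$. At stage $k$ one prescribes a common finite word $w_k$ and extends each of the $m$ coordinate codings by $w_k$; the corresponding coordinate of $\ux$ is then $O(|C_{w_k}|)$-close to $r_{w_k}$, so, writing $q_k$ for an integer denominator of $r_{w_k}$, one has $\Vert q_k\ux\Vert\ll q_k|C_{w_k}|$. Between stages, the coordinate sequences are perturbed in a staggered fashion so that $\ux\notin\mathbb{Q}^m$ and the $m$ coordinates are genuinely distinct; the resulting Cantor-like tree of choices produces uncountably many $\ux$. Choosing the lengths $|w_k|$ to grow very fast yields the Liouville property, since $q_k|C_{w_k}|\leq q_k^{-t_k}$ holds for any prescribed $t_k\to\infty$.

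The Diophantine conditions are verified by balancing the stage data. For Dirichlet improvability, given large $Q$ one picks $k$ with $q_k\leq Q<q_{k+1}$ and tests $q_k$ in \eqref{eq:eins}; the worst case $Q\approx q_{k+1}$ becomes a coupling inequality of the shape $q_k|C_{w_k}|<cq_{k+1}^{-1/m}$, which can be enforced with a uniform $c<1$ by a suitable choice of the $|w_k|$. For non-singularity, one tests $Q=q_{k+1}-1$ and shows that no denominator $q<q_{k+1}$ outperforms $q_k$ by more than a bounded factor; here one needs that $\ux$ stays sufficiently far from rationals of denominator smaller than $q_k$ at the intermediate scales, which can be arranged by the perturbations making $\ux$ look generic between stages.

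The step I expect to be the main obstacle is the arithmetic control of the denominators $q_k$ in the genuinely rational setting \eqref{eq:dDORN}. When $c_i=1/n_i$ is the reciprocal of an integer, the denominator of $r_w$ essentially equals $\prod_j n_{\om_j}-1$, tightly coupled to $|C_w|^{-1}$, and the balance above works cleanly; this is what the main body of the paper presumably exploits. For general $c_i=p_i/q_i$ with $p_i>1$, the denominator of $r_w$ involves $\prod_j q_{\om_j}-\prod_j p_{\om_j}$, whose arithmetic behaviour can be erratic and whose common factors with $q_{k+1}$ are essentially beyond our control. Securing enough uniform multiplicative structure to preserve both the Dirichlet witness and the non-singularity witness simultaneously at every scale $Q$ is the genuine difficulty, and is likely the reason Conjecture~\ref{CC} is posed as open beyond the reciprocal-of-integer case.
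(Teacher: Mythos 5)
The statement you are proving is a \emph{conjecture}: the paper explicitly does not prove it for general rational coefficients as in \eqref{eq:dDORN} with \eqref{eq:4s}, and establishes it only in the special case \eqref{eq:DORN} where the contraction rates are reciprocals of integers (Theorem~\ref{1}). Your proposal does not close this gap either --- you concede yourself in the final paragraph that for $c_i=p_i/q_i$ with $p_i>1$ the reduced denominators of the approximating rationals (of the shape $\prod_j q_{\omega_j}-\prod_j p_{\omega_j}$ in your fixed-point formulation) are arithmetically uncontrollable, and that this is ``likely the reason the conjecture is posed as open.'' That concession is exactly the missing idea: without uniform control of the reduced denominators one can neither calibrate the Dirichlet witness $q$ against $Q^{-1/m}$ nor, more seriously, exclude all competing denominators $q\le Q$ in the non-singularity step. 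So what you have is at best a sketch of the already-proved special case, not a proof of the conjecture.

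Even restricted to the reciprocal-of-integer case your sketch deviates from the paper's argument at the two points where the real work lies, and in both places the proposal would fail as written. First, you approximate by the fixed points $r_w$ of finite compositions $f_w$; their reduced denominators are essentially $\prod_j n_{\omega_j}-1$, and consecutive such integers do not divide one another. The paper instead uses truncations with an eventually constant tail, $\ttt_{j,k}\qq(0)$ with $\qq=g^Nf^Ng^{\infty}$, precisely because Lemma~\ref{wicht} and Corollary~\ref{ROC} then give $q_{j,k}=S_{j,k}P_{j,k}$ with $S_{j,k}$ bounded, together with the divisibility chain $P_{1,k}\mid P_{2,k}\mid\cdots\mid P_{m,k}\mid P_{1,k+1}$ of Proposition~\ref{ap}; this exact arithmetic structure is what the lower bound $\Theta(\ux)\gg c$ in \S~\ref{ASI} runs on, and ``tightly coupled to $|C_w|^{-1}$ in size'' is not a substitute for it. Second, you append a \emph{common} word $w_k$ to all $m$ coordinates at stage $k$, using perturbations only to make the coordinates distinct, whereas the paper staggers the scales across coordinates so that $P_{j,k}\asymp P_k^{j}$ and $P_{m,k}\asymp c^mP_k^{m}$ as in \eqref{eq:ufoA}; this multi-scale calibration is what allows one, at the test value $Q=P_k^{\ast}-1$, to classify every $q\le Q$ by the largest $P_{h,k}$ dividing it and show $\Vert q\ux\Vert\gg P_k^{-1}\gg cQ^{-1/m}$. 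Your treatment of this step --- that the intermediate-scale behaviour ``can be arranged by the perturbations making $\ux$ look generic between stages'' --- is not an argument: ruling out \emph{all} small denominators at the critical scales is the hardest part of the proof and is exactly where the denominator and divisibility control above is consumed.
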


Conjecture~\ref{CC} may hold
for irrational coefficients $c_i, d_i$ in \eqref{eq:dDORN} as well, however we are not able
to apply our method below in this case.
In the stated form with arbitrary rational coefficients it 
is still rather challenging, and we did
not succeed in proving it. 
Building up on the method from~\cite{js},
the problem turns out to become easier
when the contraction factors $c_i$ are reciprocals
of integers (remark: this restriction already occurred in~\cite[\S~4]{fs} and~\cite[Theorem~3.4]{erg} for different reasons), which just 
for simplicity of notation we assume to be positive.
Our main new result in this paper therefore specializes on 
the class of IFS
\begin{equation}  \label{eq:DORN}
f(x)= \frac{1}{b_1} \cdot x + \frac{r_1}{s_1}, \qquad
g(x)= \frac{1}{b_2} \cdot x + \frac{r_2}{s_2},
\end{equation}
where $b_i\ge 2, s_i>0, r_i$ are integers with $(r_i,s_i)=1$, and satisfying \eqref{eq:4s} which becomes
\begin{equation}  \label{eq:3s} \tag{i}
\frac{b_1r_1}{(b_1-1)s_1} \ne \frac{b_2r_2}{(b_2-1)s_2}.
\end{equation}
Indeed, we verify Conjecture~\ref{CC} for this smaller class of fractals.

\begin{theorem}  \label{1}
	For an IFS as in \eqref{eq:DORN} with restriction \eqref{eq:3s},
	the conclusion of Conjecture~\ref{CC} holds.
\end{theorem}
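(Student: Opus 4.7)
The plan is to adapt the constructive strategy of \cite{js}, originally designed for Cartesian products of classical missing digit fractals, to the broader IFS setting of \eqref{eq:DORN}. Although the attractor $C$ need not be a base-$b$ missing digit set when $b_1\neq b_2$, every $x\in C$ admits a symbolic expansion over the alphabet $\{f,g\}$: for a finite word $w=\sigma_1\cdots\sigma_n$, the composition $f_w=f_{\sigma_1}\circ\cdots\circ f_{\sigma_n}$ is an affine map with rational coefficients and contraction ratio $\lambda_w=\prod_{i\leq n} b_{\sigma_i}^{-1}$, and $x=\lim_n f_w(z_0)$ for the code $w$ of $x$. This coding will play the role of a digit expansion in the sequel.

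First I would quantify the Liouville approximations available inside $C$. For a finite code $u$ of length $\ell$ and a block of $f$'s of length $k$ appended to $u$, the point $x\in C$ whose coding starts with $uf^k$ satisfies
\[
|x-f_u(p_f)|\;\leq\; \lambda_u\cdot b_1^{-k}\cdot \mathrm{diam}(C),
\]
where $p_f=b_1r_1/((b_1-1)s_1)$ is the fixed point of $f$. A direct computation shows that the rational $f_u(p_f)$ has denominator bounded by a constant (depending only on the IFS) times $\lambda_u^{-1}$. Hence, by choosing an itinerary in which such blocks $uf^k$ occur with $k$ dwarfing $\ell$ infinitely often, one forces the single coordinate $x$ to be Liouville; a symmetric statement holds with $g$ in place of $f$, and condition \eqref{eq:3s} guarantees $p_f\neq p_g$, so both sources yield genuinely distinct approximating rationals that may be combined freely.

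Next I would construct $\ux=(x_1,\ldots,x_m)\in K$ by prescribing its coding coordinatewise, mirroring the layered construction from \cite{js}. One fixes a rapidly growing sequence of scales $Q_k$ and designs the codes of the $x_j$ so that near each $Q_k$ a single integer $q_k\leq Q_k$ yields $\|q_k\ux\|$ small enough to witness Dirichlet improvability at that scale; between successive scales one inserts long $f$-blocks as above to create the Liouville approximations for $\ux$; and at intermediate checkpoints one perturbs the coding by swapping some $f$'s for $g$'s, so as to uniformly lower-bound $\|q\ux\|$ across a band of $q$'s and thereby preclude singularity. The restriction to reciprocal-integer contractions in \eqref{eq:DORN} is used essentially here: it keeps the denominators of all cylinder endpoints controlled as divisors of an explicit product of powers of $b_1,b_2$ and $s_1s_2$, permitting clean bookkeeping of the effect of these perturbations on $\|q\ux\|$. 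Condition \eqref{eq:3s} enters again to ensure that such perturbations genuinely shift the approximation by at least an amount proportional to the relevant cylinder scale.

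The principal obstacle is verifying non-singularity in the absence of a single-base digit structure. In \cite{js}, base-$b$ digit positions give a transparent correspondence between scales $Q$ and cylinders. Here, the admissible cylinder diameters form a multiplicatively richer set indexed by pairs $(a,c)$ via $b_1^{-a}b_2^{-c}$, so one must show that at every sufficiently large $Q$ the best rational approximation to $\ux$ fails to improve beyond a fixed positive constant, despite the two competing scaling rates. I expect to handle this via a pigeonhole argument over cylinders of diameter comparable to $Q^{-1/m}$, supplemented by the separation $p_f\neq p_g$ to rule out degenerate collapses. Once non-singularity is secured, Dirichlet improvability and the Liouville property follow from the constructions above in the style of \cite{js}, and the resulting family has the cardinality of the continuum because the inserted blocks may be varied freely.
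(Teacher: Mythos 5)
Your overall strategy (adapting the layered construction of \cite{js} to the $\{f,g\}$-coding, with staggered long blocks producing Dirichlet improvability and Liouville approximants) is indeed the route the paper takes, but there is a genuine gap at the heart of the matter: you only control the denominators of the symbolic approximants from above. You note that $f_u(p_f)$ (or a cylinder endpoint) has denominator \emph{dividing} a constant times $\lambda_u^{-1}$, i.e.\ dividing $b_1^{h_1}b_2^{h_2}s_1s_2 v$; that is the easy half (Proposition~\ref{ppp} in the paper) and it is all one needs for the upper bound on $\Theta(\ux)$ and the Liouville property. But the non-singularity claim requires the opposite inequality: one must know that after reduction the denominator of each approximant is still \emph{divisible} by the full power $b_1^{h_1}b_2^{h_2}$, i.e.\ that no substantial cancellation occurs. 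Without this lower bound the whole lower-bound argument for the Dirichlet constant collapses: if the reduced denominators were much smaller than $b_1^{h_1}b_2^{h_2}$, small integers $q$ would already make all $\Vert q\xi_j\Vert$ tiny and the constructed vector could well be singular. In the missing digit case of \cite{js} this is automatic (denominators are powers of a single base $b$ up to bounded factors), but for general rational translations $r_i/s_i$ it is exactly the new technical obstacle, and it is where hypothesis \eqref{eq:3s} really enters: the paper first produces a tail word $\qq=g^Nf^Ng^{\infty}$ whose value $r/s$ has $s$ divisible by an arbitrarily high power of $b_1b_2$ (Lemma~\ref{lemur}, via the distinct fixed points guaranteed by \eqref{eq:3s}), and then shows by a $p$-adic valuation argument that prepending any word $\bold{h}$ to this tail keeps the factor $b_1^{h_1}b_2^{h_2}$ in the reduced denominator (Lemma~\ref{wicht}, Corollary~\ref{ROC}). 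Your proposal uses \eqref{eq:3s} only to assert that $p_f\neq p_g$ gives ``distinct approximating rationals,'' which is not its actual role, and your tails $f^{\infty}$ have no built-in reason to produce non-cancelling denominators.

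Relatedly, your plan for precluding singularity --- perturbing the coding by swapping $f$'s for $g$'s at checkpoints, plus a pigeonhole over cylinders of diameter $\asymp Q^{-1/m}$ --- is not an argument that one can carry out as stated: non-singularity demands a lower bound on $\Vert q\ux\Vert$ for \emph{every} integer $q\le Q$ along a sequence of $Q$'s, and such $q$ need not correspond to any cylinder of $K$. The mechanism that actually works (and is used in \S~\ref{ASI}) is arithmetic, not geometric: the staggered block positions force the reduced denominators $q_{j,k}\asymp P_{j,k}$ to form a divisibility chain with $P_{j,k}\asymp P_k^j$ as in \eqref{eq:ufoA}, so any $q<P_{m,k}$ fails to be a multiple of some $P_{h+1,k}$, whence $q\,p_{h+1,k}/q_{h+1,k}$ is a non-integer with denominator dividing $P_{h+1,k}/P_{h,k}$ and $\Vert q\xi_{h+1}\Vert\gg P_k^{-1}\gg cQ^{-1/m}$. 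To repair your proposal you would need to formulate and prove the non-cancellation statement for your approximants (or switch to tails of the paper's type) and replace the pigeonhole step by this divisibility argument.
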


Theorem~\ref{1} extends 
the special case of missing digit fractals from~\cite[Theorem~3.1]{js},
that is for $C$ the set of real numbers whose
expansions to some given base $b\ge 3$ 
only
use digits within a two digit subset of $\{0,1,\ldots,b-1\}$.
This corresponds to the setting
	\begin{equation}  \label{eq:T}
b_1=b_2=s_1=s_2=b\ge 2,\qquad r_i\in\{ 0,1,\ldots,b-1\},\; r_1\ne r_2.
\end{equation}
An example of \eqref{eq:T} is the two-fold Cartesian product of the famous Cantor middle third set, obtained for the parameter choices
$m=2$, $b=3$ and $r_1=0, r_2=2$ in \eqref{eq:T}.
Note that the regularity condition \eqref{eq:4s}, or equivalently \eqref{eq:3s}, 
follows automatically from the setup \eqref{eq:T}. 
On the other hand,
presumably numbers in a general fractal $C$ derived from \eqref{eq:DORN}
have no pattern with respect to expansion in any base, however
results of this type may be hard to prove. We once again
stress that, maybe rather surprinsingly, \eqref{eq:3s} suffices for the claim of Theorem~\ref{1}, in particular usual regularity
assumptions like the open set condition need not be assumed.

Define the Dirichlet constant of $\ux\in\Rm$ as
\begin{equation}  \label{eq:DC}
\Theta(\ux):= \limsup_{Q\to\infty}\;\;\left( Q^{1/m}\cdot \min_{1\leq q\leq Q,\; q\in\mathbb{Z}}\Vert q\ux\Vert\right).
\end{equation} 
Then $\Theta(\ux)\in [0,1]$
and $\ux$ is Dirichlet improvable iff $\Theta(\ux)<1$ strictly,
and singular iff $\Theta(\ux)=0$. 
Our proof shows that, as in~\cite{js} for missing digit sets, 
it is possible to 
construct $\ux$ in a Cantor set as in Theorem~\ref{1} whose
Dirichlet constant differs from a given real number in $[0,1]$ 
at most by a fixed factor that depends on $m,b$ only.

We emphasize again that the method in~\cite{beretc} relying on Roy's Theorem~\cite{roy}
does not give any indication how to obtain the desired restriction
to fractals as in Conjecture~\ref{CC}.
Our method is based on the author's previous work in~\cite{js},
but the proofs presented below are self-contained.
If the rational contraction rates of $f,g$ are no longer reciprocals of integers, a supposedly severe problem 
in our approach occurs. In the ``metaresult'' Theorem~\ref{mull} below we establish a weaker conclusion if we permit one contraction rate to be arbitrary rational. Analyzing its proof (especially \eqref{eq:yi}),
the obstacle when trying to generalize to both rates being arbitrary rational becomes apparent. 
Finally, if some coefficients of the IFS are irrational, then the induced Cantor set may not contain any rational vector (see Boes, Darst and Erd\H{o}s~\cite{boes} for a rigorous proof of this fact for a similar type of Cantor sets), which completely disables our proof strategy. In particular the intrinsic folklore sets \eqref{eq:ito},
\eqref{eq:intr}, \eqref{eq:extr} in Theorem~\ref{2} below become empty.

\subsection{On intrinsic approximation}
Rewriting \eqref{eq:eins} by introducing the nearest
integers to $q\xi_j$, $1\le j\le m$, and dividing by $q$, gives 
rise to rational vectors $(p_1/q,\ldots,p_m/q)$ with common denominator
$q\le Q$ and distance $<cQ^{-1/m}q^{-1}$ to $\ux$, in maximum norm.
Let
us define $Di_m^{(K)}\subseteq Di_m\cap K$ the set of ``intrinsically Dirichlet improvable vectors'' 
with respect to $K$ for which for some $c\in(0,1)$ these rational vectors can be taken within the Cantor set $K$, for all large $Q$. The inclusion $Di_m^{(K)}\subseteq K$ hereby follows 
from the compactness of $K$. Define likewise the intrinsic sets $Sing_m^{(K)}\subseteq Sing_m\cap K$ and let $Bad_m^{(K)}$ be the 
set of vectors $\ux$ for which $\max_{i} \vert p_i/q-\ux\vert \gg_{\ux} q^{-1-1/m}$ with an implied constant depending on $\ux$ only, whenever
$(p_1/q,\ldots,p_m/q)\in K$. Then a fortiori
$Bad_m^{(K)}\supseteq Bad_m$. 
In fact $Bad_m^{(K)}\supseteq Bad_m\cup K^c$ since
again the compactness of $K$
implies $K^{c}\subseteq Bad_m^{(K)}$ (any chosen point of $K^{c}$ has positive distance from $K$, in particular this holds any 
rational vector of $K$ and thus $\max_{i} \vert p_i/q-\ux\vert \gg_{\ux} 1 \gg q^{-1-1/m}$ for $(p_1/q,\ldots,p_m/q)\in K$)
but this will not be of much relevance as we will restrict
to subsets of $Di_m^{(K)}\subseteq K$ below. Define likewise intrinsic Liouville vectors where the approximating rational vectors lie in $K$, and observe this set lies in the complement of $Bad_m^{(K)}$. We may now consider variants of ``intrinsic 
folklore sets'' where certain sets are replaced by their intrinsic
versions defined above. By the aforementioned inclusions, the smallest of these sets is
\begin{equation} \label{eq:ito}
\mathbf{FS}_m^{(K)}:= Di_m^{(K)}\setminus (Sing_m\cup Bad_m^{(K)})\subseteq \mathbf{FS}_m\cap K.
\end{equation}
The real vectors that we construct in
the proof of Theorem~\ref{1} indeed have this property of 
good rational approximants within $K$.
Consequently, Theorem~\ref{1} can be refined 
by means of the derived smaller intrinsic folklore sets as follows.

\begin{theorem}  \label{2}
	With notation and assumptions as in Theorem~\ref{1}, the set 
	\begin{equation} \label{eq:intr}
	Di_m^{(K)}\setminus Sing_m
	\end{equation}
	still contains uncountably many intrinsic Liouville vectors of $K$. 
	In particular $\mathbf{FS}_m^{(K)}\ne \emptyset$ and
 consequently also
	\begin{equation} \label{eq:extr}
	\mathbf{FS}_m^{(K)\ast}:= Di_m^{(K)}\setminus (Sing_m^{(K)}\cup Bad_m^{(K)})\ne \emptyset.
	\end{equation}
\end{theorem}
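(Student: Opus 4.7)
The plan is to argue that Theorem~\ref{2} requires essentially no new work beyond Theorem~\ref{1}: one inspects the construction producing $\ux\in K$ and verifies that the rational vectors witnessing both the Dirichlet improvability and the Liouville condition already lie in $K$. This is the content of the sentence just above the statement of Theorem~\ref{2} in the excerpt.

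First I would recall the shape of the $\ux$ built for Theorem~\ref{1}. In the style of~\cite{js}, the coordinates $\xi_j$ of $\ux$ are constructed as limits of nested IFS compositions: one fixes a rapidly growing sequence of integers and, at each stage, applies a long block of either $f$ or $g$ to a designated rational base point, typically a fixed point of $f$ or of $g$ (which under \eqref{eq:DORN} is rational). Because $f$ and $g$ map $C$ into itself and these fixed points lie in $C$, every truncation of the infinite composition is a rational number belonging to $C$, so the $m$-tuples formed from such truncations in each coordinate lie in $K$. These truncations are exactly the rational vectors producing the Liouville-quality approximations to $\ux$; hence they witness intrinsic Liouville approximation and place $\ux$ in the complement of $Bad_m^{(K)}$.

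Next I would verify that the rational approximants witnessing $\ux\in Di_m$ for \emph{every} large $Q$ can likewise be chosen in $K$. The construction of~\cite{js} is engineered so that between two consecutive Liouville stages one retains uniform control of Dirichlet improvability: at each intermediate scale $Q$ the approximant used is produced by the same IFS composition procedure with block length matched to $Q$, still terminating at a rational fixed point of $f$ or $g$. Hence the resulting rational vector continues to lie in $K$. Thus for every sufficiently large $Q$ there is an integer $q\le Q$ and a rational vector in $K$ of common denominator $q$ approximating $\ux$ within $cQ^{-1/m}q^{-1}$, giving $\ux\in Di_m^{(K)}$. That $\ux\notin Sing_m$ is inherited verbatim from Theorem~\ref{1}, and uncountability follows as there, since uncountably many free choices remain (for instance in block lengths, or in the choice between $f$ and $g$ at certain stages).

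The potentially delicate point is the Dirichlet step: one must ensure that the specific approximant used for an \emph{arbitrary} large $Q$ --- not merely for the sparse sequence of Liouville scales --- actually lies in $K$. In the missing-digit setting of~\cite{js} this is automatic because truncated base-$b$ expansions of points of $C$ lie in $C$; in the present setting one instead uses that images of the fixed points of $f$ and $g$ under finite IFS compositions form a countable dense subset of $C$, and verifies that the approximant produced by the construction at scale $Q$ is always of this form. Once this intrinsic Dirichlet property is in place, the non-emptiness of $\mathbf{FS}_m^{(K)}$ in~\eqref{eq:ito} follows directly, and the inclusion $Sing_m^{(K)}\subseteq Sing_m$ then gives $\mathbf{FS}_m^{(K)}\subseteq \mathbf{FS}_m^{(K)\ast}$, yielding the non-emptiness in~\eqref{eq:extr} as well.
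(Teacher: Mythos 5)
Your proposal is correct and follows essentially the same route as the paper: Theorem~\ref{2} is obtained by inspecting the proof of Theorem~\ref{1} and noting that the approximants used there are the vectors with components $\w_{j,k}(0)=p_{j,k}/q_{j,k}$, evaluations of infinite words over $\{f,g\}$ (ending in the fixed point $g^{\infty}(0)$), hence elements of $C$, so at every scale $Q$ the approximating rational vector of common denominator $q$ lies in $K$. Your remarks on the intrinsic Liouville property excluding $Bad_m^{(K)}$, on inheriting non-singularity, and on $Sing_m^{(K)}\subseteq Sing_m$ yielding \eqref{eq:extr} match the paper's (implicit) argument.
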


However, all proposed intrinsic sets 
are not very natural objects in the sense that the natural intrinsic Dirichlet function is no longer $Q^{-1/m}$. Hereby we mean the minimal
function  $\Phi: \mathbb{N}\to (0,\infty)$ so that \eqref{eq:eins} with right hand side replaced by $\Phi(Q)$ admits an intrinsic
solution for all $Q\ge 1$
and any $\ux\in\Rm$. This inspires the following problem.

\begin{problem}
	Does a variant of \eqref{eq:extr} for altered sets
	with respect to the natural intrinsic Dirichlet function hold?
\end{problem}

Let $m=1$, and consider $K=C$ derived from an IFS as in \eqref{eq:dDORN}, and assume the open set condition holds (see~\cite{fs} for instance). Then
the natural Dirichlet function
is of the form $\Phi(Q)=c(\log Q)^{-1/d}$
for $d$ the Hausdorff dimension of $C$ and some $c>0$, see~\cite{fs}. However, the precise
value of $c$ seems to be unknown. See also~\cite[Theorem~2.1]{erg}
for partial results for a similar class of Cantor sets in arbitrary dimension $m\ge 1$.

\subsection{Extensions: on exact approximation and metric theory} 
Theorem~\ref{1} can be generalized in certain directions similar as~\cite{js}.
Here we just provide more information on exact approximation and
metrical theory.
Firstly, we can prescribe uniform approximation with respect to a wide class of functions $\Phi$ (see~\cite{js}) up to a fixed factor. We
want to explicitly state a final result that 
captures a relaxed claim regarding 
power functions $\Phi$, and
even extends to slightly more general settings. For given $\ux\in\Rm$,
define its exponent of uniform simultaneous approximation $\wo(\ux)$ 
as the supremum of $t$ such that
\[
1\le q\le Q, \qquad \Vert q\ux\Vert \le Q^{-t}
\]
has an integer solution $q$ for all large $Q$. Then $\wo(\ux)\ge 1/m$
for any $\ux\in\Rm$ by Dirichlet's Theorem and $\wo(\ux)\le 1$ for any $\ux\in \Rm\setminus \mathbb{Q}^m$ by the observation of Khintchine~\cite{khint} recalled in the introduction.
We call $\ux\in\Rm$ totally irrational if it does not lie in a rational
hyperplane of $\Rm$, a natural restriction, and refer to the spectrum
of $\wo$ for $K$ for the set of all values $\wo(\ux)$ that occur for totally irrational arguments $\ux\in K$. For $u_1\ge 1$ an integer, define an IFS
\begin{equation} \label{eq:u1}
f(x)= \frac{u_1}{b_1} + \frac{r_1}{s_1}, \qquad 
g(x)= \frac{1}{b_2} + \frac{r_2}{s_2},
\end{equation}
where clearly we can assume $u_1<b_1$ and $(u_1,b_1)=1$. 
We must assume the analogue (generalization) of \eqref{eq:3s}, which 
simply becomes 
\begin{equation} \label{eq:v} \tag{ii}
\frac{b_1r_1u_1}{(b_1-1)s_1} \ne \frac{b_2r_2}{(b_2-1)s_2}.
\end{equation}
The fractals $K$ from Theorem~\ref{1} just respresent the special case $u_1=1$ and are thus contained in the following result.

\begin{theorem}  \label{mull}
	Let $K$ be derived from an IFS as in \eqref{eq:u1} with property \eqref{eq:v}.
	Then given $\omega\in [\frac{1}{m}, \frac{1}{m-1}]\cup \{1\}$, there exist 
	totally irrational
	Liouville vectors in $K$ with $\wo(\ux)=\om$. In particular, the spectrum of $\wo$ for $K$ contains $[\frac{1}{m}, \frac{1}{m-1}]\cup\{1\}$.
\end{theorem}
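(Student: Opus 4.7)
The plan is to adapt the explicit constructive strategy of Theorem~\ref{1} (itself built on~\cite{js}) to the more general setting where the map $f$ has rational contraction $u_1/b_1$ with possibly $u_1>1$, while $g$ still has reciprocal-of-integer contraction $1/b_2$. For each prescribed value $\omega$ in the target set, I build a totally irrational Liouville vector $\ux=(\xi_1,\ldots,\xi_m)\in K$ whose $j$-th coordinate is realized as the limit of an infinite composition $h_{1,j}\circ h_{2,j}\circ\cdots$ with $h_{n,j}\in\{f,g\}$ applied to a suitable rational base point. Long pure blocks of $g$, placed at the same position in every coordinate word, produce intrinsic rational approximants $\pp_n/q_n\in K$ of $\ux$ with essentially geometric denominator growth in $b_2$, because $g$ is a reciprocal-of-integer map and denominator cancellations along $g$-iterates are bounded. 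The map $f$ is used sparingly, mainly to enforce total irrationality of the coordinates and to insert denominator "defects" that tune the approximation behavior.

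For a target $\omega\in[1/m,1/(m-1)]$, I choose the lengths $L_n$ of successive $g$-blocks so that the associated denominators satisfy $q_{n+1}\asymp q_n^{m\om}$. Running the gap/geometry-of-numbers argument from~\cite[\S3]{js} with this growth rate pins down $\wo(\ux)=\om$: at scales $Q\in[q_n,q_{n+1}]$ the denominator $q_n$ provides the best intrinsic simultaneous approximant, and a Minkowski-type transference argument rules out any competing integer $q$ in the intermediate range beating the rate $Q^{-\om}$. The Liouville property is overlaid by allowing a very sparse subsequence of stages where the following $g$-block is chosen extravagantly long, so that $\Vert q_n\ux\Vert$ drops below any prescribed negative power of $q_n$; sparsity ensures this does not affect the liminf defining $\wo(\ux)$. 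The endpoint $\om=1$ is handled by the opposite regime, where the $L_n$ are kept bounded so $\log q_{n+1}\sim\log q_n$, saturating the Khintchine upper bound $\wo\le 1$, with Liouville spikes inserted at a much sparser secondary scale.

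The main obstacle, and the reason the spectrum statement is restricted to $[1/m,1/(m-1)]\cup\{1\}$ when $u_1>1$, concerns denominator control in blocks containing $f$. Evaluating $f(p/q)=(u_1ps_1+r_1b_1q)/(b_1qs_1)$ admits partial cancellations depending on $\gcd(u_1p,b_1q)$ that can shrink the denominator of the resulting intrinsic point in $K$ in a way not fully controlled by the contraction ratio $b_1/u_1$; when further $g$-iterations are appended, these cancellations propagate and can yield competing intrinsic approximants in $K$ of denominator smaller than expected but almost as close to $\ux$ as $\pp_n/q_n$, which would spoil the intended upper bound on $\wo(\ux)$. The inequality playing the role of~\eqref{eq:yi} in the proof below is precisely the quantitative form of this obstruction: it leaves enough slack for $\om\le 1/(m-1)$, where the Minkowski transference inequalities absorb the $u_1$-induced loss, and for the coarse endpoint $\om=1$, where the required comparisons are insensitive to bounded cancellation factors, but it fails for intermediate values. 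Pushing the construction beyond this range for arbitrary rational contractions would require a genuinely new mechanism to defeat this $\gcd$-cancellation, which is where the approach of~\cite{js} breaks down.
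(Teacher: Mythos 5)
Your construction is not the paper's and, as described, cannot reach most of the target spectrum. You place the long $g$-blocks at the \emph{same} positions in all $m$ coordinates, so that the approximants form a single common-denominator sequence $q_n$ with $q_{n+1}\asymp q_n^{m\om}$, and you then invoke a ``Minkowski-type transference argument'' to exclude competing $q$. With common denominators the only lower bounds available at scale $Q$ just below $q_{n+1}$ are $\Vert q\ux\Vert\gg q_n^{-1}$ when $q_n\nmid q$ and $\Vert tq_n\ux\Vert\gg q_n/q_{n+1}$ otherwise; demanding both exceed $Q^{-\om}\asymp q_{n+1}^{-\om}$ forces $q_{n+1}^{1-\om}\ll q_n\ll q_{n+1}^{\om}$, which is possible only for $\om\ge 1/2$. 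Hence for $m\ge 3$ the upper bound $\wo(\ux)\le\om$ is out of reach for most of $[\frac1m,\frac1{m-1}]$ (and your bookkeeping $q_{n+1}\asymp q_n^{m\om}$ even degenerates at $\om=1/m$). The missing idea is the staggering of scales \emph{across coordinates}: the paper chooses the $m$ sequences $\textswab{g}_{j,k}$ so that $P_{j,k}\asymp P_k^{j}$ for $j\le m-1$ and $P_{m,k}\asymp P_k^{1/\om}$ (see \eqref{eq:ufoA}, \eqref{eq:FRITZ}), and it is the divisibility chain of Proposition~\ref{ap} together with the reduced-denominator control of Lemma~\ref{wicht} and Corollary~\ref{ROC} that eliminates every $q\le P_{m,k}-1$ in \S~\ref{ASI}; no transference principle substitutes for this arithmetic structure. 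The Liouville property then comes from the huge jump from $P_k^{\ast}$ to $P_{k+1}$, which does not disturb the liminf precisely because the $m$ staggered denominators cover the intermediate range.

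Two further defects. For the endpoint $\om=1$ your regime is backwards: bounded block lengths with $\log q_{n+1}\sim\log q_n$ give $\Vert q_n\ux\Vert\asymp q_n/q_{n+1}\asymp 1$, i.e.\ no usable approximations at all; one needs $\log q_n=o(\log q_{n+1})$, which is why the paper takes fast increasing exponents $a_i^{(j)}$, and total irrationality there requires the bounded perturbations $v_i^{(j)}$ plus an inductive variational argument that your sketch does not supply (nor do you address the $+k$ twist needed at $\om=\frac{1}{m-1}$). Finally, your diagnosis of the role of $u_1>1$ misidentifies the mechanism and the source of the spectral restriction: since $(u_1,b_1)=1$, the denominator control of \S~\ref{2.4} survives essentially verbatim, and the genuine mismatch is between the contraction product and $P_{j,k}$, namely the factor $u_1^{\textswab{f}_{k+1}}$ in \eqref{eq:yi}, which is harmless because $f$ occurs only $\textswab{f}_k=Nk=o(\textswab{g}_{j,k})$ times; the restriction to $[\frac1m,\frac1{m-1}]\cup\{1\}$ is not caused by $u_1$ at all (it is already present for $u_1=1$ and is expected to be removable by small twists), whereas the real obstruction of this kind only appears when \emph{both} contraction rates have non-unit numerators.
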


\begin{remark}
	To obtain the claim for $\omega=1$ only, we can take any IFS as in \eqref{eq:dDORN}.
\end{remark}

The accordingly modified claims of
Theorem~\ref{2} on intrinsic approximation apply as well.
For $\om<\frac{1}{m-1}$ the condition of $\ux$ being totally irrational
is automatically satisfied, so in particular
in context of Theorem~\ref{1}.   
Theorem~\ref{mull} complements results by Kleinbock, Moshchevitin and Weiss~\cite[Theorem~1.6]{kmw}, who
provide the weaker conclusion that the spectrum of $\wo$ has non-empty intersection with $[\frac{1}{m-1},1]$,
but for a larger class of fractals (still their result seems
not to cover IFS as in \eqref{eq:dDORN} but with irrational coefficients $c_i, d_i$).
It is likely that small twists of the proofs 
enable one to extend the claim to the entire interval $[1/m,1]$, 
as for $m=2$.
Note that upon dropping the totally irrational condition, this an easy consequence of the original result. We may consider
$\tilde{m}:=\lceil \omega^{-1}\rceil$ and obtain totally irrational vectors $\tilde{\ux}\in \R^{\tilde{m}}$ as in Theorem~\ref{mull}.
Then it suffices to take the remaining $m-\tilde{m}\ge 0$ coordinates 
of $\ux$ arbitrarily within the $\mathbb{Q}$-span
of $\{ \tilde{\xi}_1,, \ldots, \tilde{\xi}_{\tilde{m}},1\}\in \R^{\tilde{m}+1}$. Note further that in the special case 
of missing digit Cantor sets,
the spectrum of $\wo$ is indeed $[1/m,1]$, as follows
easily from the more general results in~\cite[Theorem~2.5]{ich2013}.

Regarding metrical theory of the
sets $\mathbf{FS}_m \cap K$, our method alludes
that lower bounds on Hausdorff dimension and possibly also 
packing dimension can be 
derived, by a similar strategy as in~\cite{js}: If $b_1=b_2$,
we can arbitrarily redefine
the digits of the words $\w_j$ defined in~\S~\ref{s} below
at positions
in the same type of long intervals as in~\cite[\S~6]{js},
without the induced real vectors leaving the set. 
When $b_1\neq b_2$, one needs to be more careful in view of certain restrictions that enter in order to preserve an analogue of \eqref{eq:ufoA} below, but the basic idea remains the same.
What remains to be done is to estimate from below the dimensions of the subsets of $\mathbf{FS}_m \cap K\subseteq \Rm$ induced by these digital patterns. We believe
this should be feasible by a similar strategy as in~\cite{js}, 
at least for the Hausdorff dimension, possibly again with the aid of a principle from Falconer's book~\cite[Example~4.6]{falconer}. We 
conjecture that this bound is independent of the shifts,
and therefore if $b_1=b_2$ agrees with the bounds in the case of missing digit sets from~\cite{js}.

\subsection{A remark on more general fractals}

We finally notice that attractors of certain IFS consisting of rational, affine maps defined on $\mathbb{R}^m$ do not contain any element in $\bold{FS}_m$. Indeed, consider for example $K$ the attractor of
\[
f(\x)= \frac{1}{2} \x, \qquad g(\x)= \frac{1}{3} \x + \frac{1}{4}\cdot 
(1,1,\ldots,1)^{t}.
\]
Then $K$ lies in the one-dimensional rational subspace defined by $x_1=x_2=\cdots=x_m$, so any $\ux\in K\setminus \mathbb{Q}^m
$ has Dirichlet exponent $\wo(\ux)=1$, more precisely
\[
1\leq q\leq Q, \qquad \Vert q\ux\Vert \le Q^{-1}
\]
has an integer solution $q$ for all $Q>1$.
In particular, all elements of $K$ are (very) singular 
as soon as $m\ge 2$.
So it seems reasonable to consider Cartesian products of one-dimensional objects as in our results to avoid obstructions of this kind.

\subsection{Notation and some IFS theory}  \label{IFS}

We use $A\asymp B$ to denote $A\ll B\ll A$,
with Vinogradov's notation $A\ll B$ meaning that $A\leq cB$ for
some fixed $c>0$. We write $\mathbb{P}$ for the set of prime numbers.
We write $\lfloor x\rfloor$ for the largest integer less than or equal to $x\in\R$
and $\lceil x\rceil$ for the smallest integer larger or equal to $x$.
Denote as usual by $v_p(d)\in\mathbb{Z}$ the multiplicity of the prime $p$ in a rational number $d$, which is negative if $p$ divides the denominator (assuming $d$ is reduced), where we let $v_p(0)=\infty$.

Any IFS consisting of contracting maps $f, g$ on $\R$ induces an attractor $C\subseteq \mathbb{R}$ 
defined as the set of points
\[
\xi= \w(0)=\lim_{k\to\infty} w_1\circ w_2 \circ \cdots w_k(0),
\]
obtained from infinite words $\w=w_1w_2\cdots$ 
with $w_i\in \{f,g\}$. We call $w_i$ digits
and the digit string $\w=w_1w_2\cdots$ an address of $\xi\in C$, and
omit the symbol $\circ$ occasionally. Any $\xi\in C$ has at least one address, but possibly it is not unique. Further we recall that
a sequence of words $(\w_k)_{k\ge 1}$ converges to a word $\w$ 
by definition if any finite prefix of $\w$ coincides with the according string of $\w_k$
for all large enough $k$. It is easy to see that $\w_k\to \w$
implies $\w_k(0)\to \w(0)$ in $\R$.
We write $|\w|$ for the length of a finite word $\w$.
 If $\w_1$ is a finite word and
$\w_2$ any word, we just write $\w_1\w_2$ for the concetanation
of the words, i.e. reading first the digits of $\w_1$
followed on the right by the digit string of $\w_2$.

\section{Proof of Theorem~\ref{1} }

The proof generalizes the ideas from~\cite{js}, however
some technical hurdles have to be mastered.
Start with $c\in(0,1)$ small enough. The goal is to construct 
a Liouville vector $\ux\in\Rm$
with Dirichlet constant $\Theta(\ux)\asymp c$,
see definition \eqref{eq:DC},
where the implied
constants only depend on the IFS coefficients only. 
If we choose $c$ smaller
than the inverse of the implied constant for the upper estimate, then $\ux$
is Dirichlet improvable, and
the other claimed properties not singular and not badly approximable 
are obvious. 
We will first construct suitable $\ux\in K$ in~\S~\ref{s}. 
Then we proceed to present some
auxiliary results in~\S~\ref{2.1}-\S~\ref{2.4} whose proofs only
require elementary number theory and basic IFS theory.
The core of the proof  in~\S~\ref{SIA},~\ref{ASI} is to
finally verify $\Theta(\ux)\asymp c$, however given the auxiliary results
this works very similar as for the special cases of missing digit fractals in~\cite{js}.

\subsection{Construction of suitable $\ux$}  \label{s}

We essentially follow the construction for Cartesian products of missing digit fractals in~\cite{js}, where reading a base $b$ digit $1$ resp. $0$
here becomes reading contraction $f$ resp. $g$. 
In~\cite{js} the components of $\ux$ have base $b$ digit $1$ at isolated positions between long strings
of $0$ digits. In our more general setup, we take blocks of $N$ consecutive digits $f$ between long strings of digits $g$ instead, for large enough $N$ depending on the IFS.
This  will be reflected in the 
periodic suffix words $\pp, \qq$ defined below.

Let $N$ be a large integer, a lower bound to be defined in~\S~\ref{2.1} below.
Define the ultimately periodic words 
\[
\pp=f^{N}g^{\infty}, \qquad \qq=g^N \pp=g^N f^N g^{\infty}.
\]
Let $(M_i)_{i\ge 1}$ be any fast increasing (lacunary) 
sequence of positive integers.

We define $m+1$ increasing positive integer
sequences $(\textswab{f}_{k})_{k\ge 0}$ 
and $(\textswab{g}_{j,k})_{k\ge 0}$,
for $1\le j\le m$. The first
is simply given by
\begin{equation}  \label{eq:ID}
\textswab{f}_{k}= Nk, \qquad \; k\ge 0.
\end{equation}
For $\textswab{g}$ sequences, define the initial terms as
\[
\textswab{g}_{j,0}=0, \qquad 1\le j\le m
\]
and complete the sequence for $j=1$ via
\begin{equation} \label{eq:folgt}
\textswab{g}_{1,k}=M_k, \qquad  k\ge 1.
\end{equation}
For the remaining $j$ and $k\ge 1$, we choose
$\textswab{g}_{j,k}$ so that they satisfy
\begin{equation} \label{eq:ple}
b_1^{ \textswab{f}_{k} } b_2^{ \textswab{g}_{j,k} } \asymp (b_1^{ \textswab{f}_{k} } b_2^{ \textswab{g}_{1,k} } )^j, \qquad 2\leq j\le m-1, 
\end{equation}
and
\begin{equation}  \label{eq:simply}
b_1^{ \textswab{f}_{k} } b_2^{ \textswab{g}_{m,k} } \asymp c^m(b_1^{  \textswab{f}_{k} } b_2^{ \textswab{g}_{1,k} } )^m,
\end{equation}
with some absolute implied constant depending on the IFS only
but not on $k$. 
It is easy to see this can be done, we may take
\begin{equation}  \label{eq:simpli}
\textswab{g}_{j,k}= \left\lfloor \frac{\log (b_1^{(j-1)\textswab{f}_{k}} b_2^{j\textswab{g}_{1,k}}) }{ \log b_2}\right\rfloor, \quad (2\le j\le m-1), \quad
\textswab{g}_{m,k}= \left\lfloor \frac{\log (c^m b_1^{(m-1)\textswab{f}_{k}} b_2^{m\textswab{g}_{1,k}}) }{ \log b_2}\right\rfloor.
\end{equation}
Then it is easy to see that if $(M_i)_{i\ge 1}$ increase fast enough then
\[
\eta_{j,k}:=\; \textswab{g}_{j,k} - \textswab{g}_{j,k-1} > 0, \qquad 1\le j\le m, \; k\ge 1.
\]
Define $m$ sequences of finite words with initial terms $\vv_{j,0}=g^N$ for $1\leq j\le m$ and
\[
\vv_{j,k}= g^N f^N g^{\eta_{j,1}} f^N g^{\eta_{j,2}} \cdots g^{\eta_{j,k-1}}f^N g^{\eta_{j,k}}, \qquad 1\le j\le m, \; k\ge 1.
\]
Define also
their prefix sequences when omitting the last $N$ digits so that
$\ttt_{j,0}=\emptyset$ and
\[
\ttt_{j,k}= g^N f^N g^{\eta_{j,1}} f^N g^{\eta_{j,2}} \cdots g^{\eta_{j,k-1}}f^N g^{\eta_{j,k}-N}, \qquad 1\le j\le m, \; k\ge 1.
\]
By the fast increase of $M_k$ we
have 
\begin{equation} \label{eq:agha}
|\vv_{1,k}|<|\vv_{2,k}|<\cdots <  |\vv_{m-1,k}|< |\vv_{m,k}|< |\vv_{1,k+1}|, \qquad k\ge 1,
\end{equation}
and likewise for $\ttt_{.,.}$.
Derive sequences of ultimately periodic infinite words by
\[
\w_{j,k}= \ttt_{j,k}\qq= \vv_{j,k}\pp= g^N f^N g^{\eta_{j,1}} f^N g^{\eta_{j,2}} \cdots g^{\eta_{j,k}} f^N g^{\infty}, \qquad 1\le j\le m,\; k\ge 0.
\]
The initial terms are $\w_{j,0}=\qq$
and they converge to infinite words 
\begin{equation}  \label{eq:HH0}
\w_{j}= g^N f^N g^{\eta_{j,1}} f^N g^{\eta_{j,2}} \cdots , \qquad 1\le j\le m.
\end{equation}
Then finally we define the components of $\ux=(\xi_1,\ldots,\xi_m)$ to be
\[
\xi_j= \w_j(0)= \lim_{k\to\infty} \w_{j,k}(0), \qquad 1\le j\le m.
\]
$\bold{Observation}$:
For $k\ge 0$, the integers $\textswab{f}_{k}$ resp. $\textswab{g}_{j,k}$ count the occurrences of $f$ resp. $g$ in the words $\ttt_{j,k}$. In particular
\begin{equation}  \label{eq:AB1}
|\vv_{j,k}|-N= |\ttt_{j,k}|= \textswab{f}_{k}+ \textswab{g}_{j,k}, \qquad 1\le j\le m, \; k\ge 0.
\end{equation}



\subsection{A rational in $C$ with denominator divisible by large $b_1b_2$ powers }  \label{2.1}

For the proof of the crucial Lemma~\ref{wicht} in~\S~\ref{2.4} below,
we first require the existence of rational numbers in $C$ whose denominator is divisible by an arbitrarily large given power of $b_1b_2$.
We establish this auxiliary result in this section.

\begin{lemma}  \label{lemur}
	Let $C$ be induced by an IFS as in \eqref{eq:DORN} with restriction
	\eqref{eq:3s}.  
	Let $\ell\ge 0$ be an integer. Then for any 
	large enough positive integer $N$,
	then writing $\qq=g^{N}f^{N}g^{\infty}$ as in \S~\ref{s},
	the rational number $\qq(0)=r/s$ in $C$ written in reduced form
	has the property that $b_1^{\ell}b_2^{\ell}$ divides $s$.
\end{lemma}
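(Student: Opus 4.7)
The plan is to write $\qq(0)$ in closed form and then show, for every prime $p$ dividing $b_1 b_2$, that $v_p(\qq(0)) \le -\ell(v_p(b_1)+v_p(b_2))$ for all sufficiently large $N$. Denote by $\alpha=\frac{b_1r_1}{(b_1-1)s_1}$ and $\beta=\frac{b_2r_2}{(b_2-1)s_2}$ the fixed points of $f$ and $g$, which are distinct by \eqref{eq:3s}. From $f(x)-\alpha=(x-\alpha)/b_1$ and $g(y)-\beta=(y-\beta)/b_2$ one iterates to obtain $f^N(x)=\alpha+(x-\alpha)/b_1^N$ and $g^N(y)=\beta+(y-\beta)/b_2^N$; combined with $g^\infty(0)=\beta$, this gives
\[
\qq(0) \;=\; g^N\bigl(f^N(\beta)\bigr) \;=\; \beta + \frac{(\alpha-\beta)(b_1^N-1)}{b_1^N b_2^N}.
\]
Call the second summand $T_N$.

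The remainder of the argument is purely $p$-adic. Fix a prime $p \mid b_1 b_2$. If $p \mid b_1$, then $v_p(b_1^N-1)=0$, so $v_p(T_N)=v_p(\alpha-\beta)-N(v_p(b_1)+v_p(b_2))$ tends to $-\infty$ linearly in $N$. If $p \mid b_2$ but $p \nmid b_1$, one must control $v_p(b_1^N-1)$; a standard lifting-the-exponent argument (let $d$ be the multiplicative order of $b_1$ modulo $p$; then $v_p(b_1^N-1)=0$ unless $d\mid N$, and otherwise $v_p(b_1^N-1)=v_p(b_1^d-1)+v_p(N/d)$, with a minor adjustment at $p=2$) gives $v_p(b_1^N-1)=O_{p,b_1}(\log N)$. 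Hence $v_p(T_N)\le v_p(\alpha-\beta)+O(\log N)-Nv_p(b_2)$, which again tends to $-\infty$ linearly.

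In either case, once $N$ is large one has $v_p(T_N)<v_p(\beta)$ (note that $v_p(\beta)$ is a fixed finite integer) together with $v_p(T_N)\le -\ell(v_p(b_1)+v_p(b_2))$; the first estimate and the ultrametric property force the equality $v_p(\qq(0))=v_p(\beta+T_N)=v_p(T_N)$. Because only finitely many primes divide $b_1 b_2$, a single $N$ can be chosen which works for all of them simultaneously. Writing $\qq(0)=r/s$ in reduced form, the above says $v_p(s)\ge v_p(b_1^\ell b_2^\ell)$ for each such $p$, and the inequality is trivial for the remaining primes; therefore $b_1^\ell b_2^\ell \mid s$, as claimed.

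The main obstacle is the mixed case $p \mid b_2$, $p \nmid b_1$: the factor $b_1^N-1$ in the numerator of $T_N$ partially cancels the $p$-adic weight of the $b_2^N$ in the denominator. The naive bound $v_p(b_1^N-1)<N\log_p b_1$ does not suffice, because $\log_p b_1$ may exceed $v_p(b_2)$; the point is that the cancellation is only logarithmic in $N$, and supplying this refined bound via lifting-the-exponent is the essential arithmetic input.
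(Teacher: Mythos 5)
Your proof is correct, and it takes a genuinely different route from the paper. The paper proceeds in two stages through a general iteration lemma (its Proposition~\ref{Pro}): it first applies $f^{N_1}$ to the fixed point $g^{\infty}(0)$ to force a high power of $b_1$ into the denominator, then applies $g^{N_2}$ to force a high power of $b_2$, and separately checks via the prime classes $\mathscr{P}_1,\mathscr{P}_2,\mathscr{P}_3$ and iterative use of \eqref{eq:einfach} that the primes dividing $b_1$ but not $b_2$ do not lose multiplicity in the second stage; at the end it harmonizes the two block lengths $N_1,N_2$ into a single $N$. You instead compute the exact closed form
\begin{equation*}
g^Nf^Ng^{\infty}(0)=\beta+\frac{(\alpha-\beta)(b_1^N-1)}{b_1^Nb_2^N},
\end{equation*}
with $\alpha,\beta$ the fixed points, and estimate $p$-adic valuations directly. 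The only delicate point, the mixed case $p\mid b_2$, $p\nmid b_1$, where $b_1^N-1$ can feed $p$-factors back into the numerator, you control by lifting-the-exponent, getting $v_p(b_1^N-1)=O(\log N)$; this is exactly the cancellation that, in the paper's argument, hides inside the constant $z_p$ of Proposition~\ref{Pro} applied with $u/v=f^{N}g^{\infty}(0)$, and your treatment makes the final ``take $N=\max\{N_1,N_2\}$'' step of the paper transparent, since you work with the same exponent $N$ in both blocks from the start. What each approach buys: yours is shorter, self-contained and effectively quantifies how large $N$ must be; the paper's modular route (Proposition~\ref{Pro} plus digit-by-digit use of \eqref{eq:einfach}) is reused almost verbatim in the proof of Lemma~\ref{wicht} for arbitrary prefix words, so it generalizes beyond the specific word $g^Nf^Ng^{\infty}$.

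One cosmetic remark: your parenthetical ``$v_p(\beta)$ is a fixed finite integer'' can fail, since $\beta=0$ is allowed (e.g.\ $r_2=0$, $s_2=1$), in which case $v_p(\beta)=\infty$ under the paper's convention. This does not affect the argument: the inequality $v_p(T_N)<v_p(\beta)$ then holds a fortiori and $v_p(\beta+T_N)=v_p(T_N)$ trivially, so the conclusion stands.
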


The proof is easy in certain cases, however for the general case
we need some preparation.
For convenient later referencing, we point out that if $p$ is a prime number and $e_1, e_2$ are rational numbers then
 \begin{equation} \label{eq:einfach}
     	v_p(e_1)\neq v_p(e_2) \qquad \Longrightarrow \qquad v_p(e_1+e_2)= \min \{ v_p(e_1), v_p(e_2) \}. 
 \end{equation}

The claim is well-known and we omit its short very elementary proof.

\begin{proposition}  \label{klaro}
	Let $f,g$ be strict contractions on $\R$,
	inducing different fixed points $f^{\infty}(0)\ne g^{\infty}(0)$. Then
	for any integer $N\ge 1$, we have $f^Ng^{\infty}(0)\ne g^{\infty}(0)$.
\end{proposition}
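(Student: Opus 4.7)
My plan is to reduce the statement to a standard consequence of the Banach fixed-point theorem. Write $\alpha = f^\infty(0)$ and $\beta = g^\infty(0)$, which are well-defined as limits since $f,g$ are strict contractions; these are precisely the (unique) fixed points of $f$ and $g$, so $f(\alpha)=\alpha$ and $g(\beta)=\beta$. The assumption is $\alpha \ne \beta$, and what we must show is $f^N(\beta) \ne \beta$ for every $N \ge 1$, since $g^\infty(0) = \beta$ implies $f^N g^\infty(0) = f^N(\beta)$.

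Next, I would invoke the Lipschitz constant of $f$: by the hypothesis of strict contraction there is some $L \in [0,1)$ with $|f(x)-f(y)| \le L\,|x-y|$ for all $x,y\in\R$. Iterating and using $f(\alpha)=\alpha$, this yields
\[
|f^N(\beta) - \alpha| \;=\; |f^N(\beta) - f^N(\alpha)| \;\le\; L^N\,|\beta - \alpha|.
\]

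The contradiction step is now immediate: if we had $f^N(\beta)=\beta$, then $|\beta-\alpha| \le L^N|\beta-\alpha|$, and since $\beta\ne \alpha$ we may divide by the positive quantity $|\beta-\alpha|$ to obtain $1\le L^N$, contradicting $L<1$ and $N\ge 1$. Hence $f^N(\beta)\ne \beta$, proving the proposition.

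I do not anticipate any real obstacle here; the only small point of care is to confirm that \emph{strict contraction} in this paper's sense supplies a uniform Lipschitz constant strictly below $1$ (which is automatic for the affine maps in \eqref{eq:DORN}, where one can take $L=\max(1/b_1,1/b_2)<1$), so the Banach argument applies. If desired, one can phrase the proof even more quickly by noting that the forward orbit $\{f^k(\beta)\}_{k\ge 0}$ converges to $\alpha$ and is strictly monotone in the distance to $\alpha$, hence cannot return to $\beta$.
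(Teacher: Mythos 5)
Your proof is correct, and it takes a somewhat different route from the paper's. The paper argues softly: it first reduces to large $N$ (observing that $f^Ng^\infty(0)=g^\infty(0)$ for some $N$ would propagate to all multiples of $N$), and then uses that $f^N(e)\to f^\infty(0)$ for every $e$, so that for large $N$ the point $f^N(g^\infty(0))$ is close to the fixed point of $f$ and hence at positive distance from the fixed point of $g$. You instead give a direct quantitative argument: with $\alpha=f^\infty(0)$, $\beta=g^\infty(0)$ and Lipschitz constant $L<1$, the hypothesis $f^N(\beta)=\beta$ would give $|\beta-\alpha|=|f^N(\beta)-f^N(\alpha)|\le L^N|\beta-\alpha|$, which is impossible for $\beta\ne\alpha$. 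This handles every $N\ge 1$ in one stroke, with no reduction to multiples and no limiting argument, and it generalizes verbatim to any metric space (as does the paper's proof, which the paper notes). The only point you rightly flag is that ``strict contraction'' must supply a uniform constant $L<1$; in this paper that is automatic, since the maps are affine with slopes $1/b_1,1/b_2$, so your argument applies without caveat. The paper's softer argument would, in exchange, survive under the weaker hypothesis that one only knows convergence of the iterates $f^N(e)$ to the fixed point, without any uniform rate.
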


\begin{proof}
	It suffices to show the claim for all large enough $N$, as if
	there is equality $f^N g^{\infty}(0)= g^{\infty}(0)$ for some $N$, then
	the equality holds for all positive integer multiples of $N$ as well.
	It is easily checked that for any element $e\in \R$
	the sequence $f^N(e)$ tends to the fixed point $f^{\infty}(0)$ 
	of $f$ as $N\to\infty$. Hence 
	if the fixed points of $f, g$ are different and thus have positive
	distance, for large enough $N$ there is positive distance between
	$f^N(e)$ and the fixed point of $g$. It then suffices to take $e=g^{\infty}(0)$ the fixed point of $g$. 
	\end{proof}

The claim can be generalized to any metric space by the same argument.

\begin{proposition}  \label{Pro}
	Let $F(x)= x/b+r/s$ be any affine contraction with rational coefficients. Let $u/v\neq br/((b-1)s)$ be a rational number 
	not equal to the fixed point of $F$. Then for an arbitrarily large integer $t\ge 0$, for large enough $N\ge N_0(t)$ we have that $F^N(u/v)$ is a rational number and after reduction has
	denominator divisible by $b^{t}$.
\end{proposition}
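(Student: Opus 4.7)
The plan is to compute the iterate $F^N$ in closed form and then compare $p$-adic valuations for each prime $p$ dividing $b$. Since $F(x)=x/b+r/s$ is an affine contraction, a direct induction gives
\[
F^N(x)= \alpha + \frac{x-\alpha}{b^N}, \qquad \alpha:=\frac{br}{(b-1)s},
\]
where $\alpha$ is the fixed point of $F$. Substituting $x=u/v$ and setting $e:= u/v-\alpha\in \mathbb{Q}\setminus\{0\}$ (the hypothesis $u/v\ne \alpha$ enters precisely here), we obtain
\[
F^N(u/v) = \alpha + \frac{e}{b^N}.
\]

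Now fix any prime $p\mid b$. Since $v_p(b)\ge 1$, we have $v_p(e/b^N)= v_p(e)-Nv_p(b) \to -\infty$ as $N\to\infty$, whereas $v_p(\alpha)$ is a fixed (finite or $+\infty$) value independent of $N$. Hence for all sufficiently large $N$ we have $v_p(e/b^N)<v_p(\alpha)$, and by the implication \eqref{eq:einfach} applied to the two summands $\alpha$ and $e/b^N$,
\[
v_p\bigl(F^N(u/v)\bigr) = v_p(e) - N v_p(b).
\]
In particular $F^N(u/v)$ is indeed rational, and its reduced denominator has $p$-adic valuation at least $N v_p(b) - v_p(e)$.

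Finally, since there are only finitely many primes $p_1,\ldots,p_r$ dividing $b$, we may choose $N$ large enough so that the above inequality is simultaneously satisfied for each of them with $Nv_{p_i}(b)-v_{p_i}(e)\ge t\cdot v_{p_i}(b)$, i.e.\ so that $p_i^{t v_{p_i}(b)}$ divides the reduced denominator of $F^N(u/v)$ for every $i$. Multiplying these prime-power divisibilities together yields that $b^t$ divides the denominator, as required. I do not anticipate any serious obstacle: the one place where care is needed is to verify that $\alpha\ne u/v$ translates to $e\ne 0$ (so that $v_p(e)$ is a finite integer), and to treat the boundary case $r=0$ (hence $\alpha=0$, $v_p(\alpha)=+\infty$) uniformly — but both fit directly into the valuation argument above via \eqref{eq:einfach}.
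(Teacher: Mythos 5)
Your proof is correct, and it is in essence the same $p$-adic valuation argument as the paper's, organized around a cleaner decomposition. The paper expands $F^N(u/v)$ explicitly as $\frac{us(b-1)+rbv(b^N-1)}{b^N vs(b-1)}$ and bounds the valuation of the numerator $H_N$ by reducing it modulo $p^t$ (its residue $us(b-1)-rbv$ is a nonzero constant precisely because $u/v$ is not the fixed point), then invokes \eqref{eq:einfach} to conclude $v_p(H_N)$ stays bounded while the displayed denominator gains $Nv_p(b)$ factors of $p$; this yields the concrete choice $N=2t$. You instead recenter at the fixed point, writing $F^N(u/v)=\alpha+e/b^N$ with $e=u/v-\alpha\ne 0$, and apply \eqref{eq:einfach} directly to the two summands, getting the exact valuation $v_p(F^N(u/v))=v_p(e)-Nv_p(b)$. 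The two routes are the same idea in different coordinates: your version makes the role of the hypothesis $u/v\ne br/((b-1)s)$ completely transparent (it is exactly $e\ne 0$) and handles the case $r=0$ uniformly, while the paper's version stays closer to the explicit fraction it reuses later and produces an explicit $N_0(t)$. Your final step, assembling the prime-power divisibilities $p_i^{tv_{p_i}(b)}$ into $b^t\mid$ (reduced denominator) over the finitely many primes $p_i\mid b$, is also the same bookkeeping as in the paper.
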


\begin{proof}
	For a formal variable $x$ and an integer $N\ge 1$, it is easily checked that 
	\[
	F^N(x)= \frac{x}{b^N}+ \frac{r}{s}(1+b^{-1}+\cdots +b^{-N+1}).
	\]
	Inserting $x=u/v$ and simplifying we get
	\begin{equation}  \label{eq:denom}
	F^N(u/v)= \frac{1}{b^N}\cdot \frac{ us(b-1)+rbv(b^N-1)}{vs(b-1)}.
	\end{equation}
	Denote the numerator by
	\[
	H_N= us(b-1)+rbv(b^N-1).
	\]
	Take $N\ge t$ and $p$ any prime divisor of $b$.
	Reducing $H_N$
	modulo $p^{t}$ gives residue class $us(b-1)-rbv$, which is a constant independent of $N$ and $p$.
	Our assumption $u/v\neq br/((b-1)s)$ is equivalent to
	this constant being non-zero. Then 
	$z_p:= v_p(us(b-1)-rbv)<\infty$ is finite and 
	we may assume without loss
	of generality that $t>\max_{p|b} z_p$, as increasing $t$ sharpens
	the claim we aim to prove. 
	Then by \eqref{eq:einfach} we infer
	\[
	v_p(H_N) \leq \max_{p|b} z_p < t\le t v_p(b),\qquad p|b.
	\]
	Since the denominator in \eqref{eq:denom} in given form 
	contains every such prime factor dividing $b$ 
	at least $Nv_p(b)\ge N$ times, 
	after reduction of \eqref{eq:denom} there are still at least
	$(N-t)v_p(b)$ factors $p$ left in the denominator. Since this holds for 
	any $p|b$, we deduce that
	$b^{N-t}$ divides the reduced denominator. Thus 
	it suffices to take $N=2t$.
	\end{proof}

Given an IFS as in \eqref{eq:DORN}, we introduce classes of prime numbers by
\[
\mathscr{P}_1= \{ p\in \mathbb{P}: \; p|b_1 \}, \quad \mathscr{P}_2= \{ p\in \mathbb{P}: \; p|b_2 \}, \quad \mathscr{P}_3=\mathscr{P}_1\setminus \mathscr{P}_2= \{ p\in \mathbb{P}: \; p|b_1,\; p\nmid p_2 \}, 
\]
and finally let
\begin{equation} \label{eq:HHh}
\mathscr{P}_4= \mathscr{P}_1 \cup \mathscr{P}_2= \mathscr{P}_3 \cup \mathscr{P}_2
\end{equation}
be the prime divisors of $b_1b_2$. We can finally prove our lemma.
We will denote by $\rm{rad}(N)$ the radical of a positive integer $N$,
defined as the product of its prime factors, without multiplicities. 

\begin{proof}[Proof of Lemma~\ref{lemur}]
	We first apply Proposition~\ref{Pro} for
	\[
	t=2\ell, \qquad
	u/v= g^{\infty}(0), \qquad F=f, \qquad b=b_1,
	\]
	to get that  for some large enough integer $N_1$ the integer $b_1^{2\ell}$ divides the denominator of the rational number $f^{N_1}g^{\infty}(0)$
	after reduction. Write $A/B=f^{N_1}g^{\infty}(0)$ in reduced form. 
	We may increase $\ell$ and hence
	$N_1$ if necessary so that all primes in $\mathscr{P}_1$
	occur in $B$ in strictly higher multiplicity
	than in $s_2$. Thus we have 
	\begin{equation}  \label{eq:asua}
	v_p(B)\ge \max\{ 2\ell v_p(b_1) , v_p(s_2)+1\}, \qquad p\in \mathscr{P}_1. 
	\end{equation}
	Moreover $f^{N_1}g^{\infty}(0)\neq g^{\infty}(0)$ 
	by Proposition~\ref{klaro}.
	Then we again apply Proposition~\ref{Pro} with
	\[
	t=2\ell, \qquad u/v= f^{N_1}g^{\infty}(0), \qquad F=g, \qquad b=b_2,
	\]
	which gives
	that for large enough $N_2$ the denominator of $r/s=g^{N_2}f^{N_1}g^{\infty}(0)$ in reduced form is divisible by 
	$b_2^{2\ell}$. In particular, we have
	\begin{equation}  \label{eq:J1}
	v_p(s)\ge 2\ell v_p(b_2), \qquad p\in \mathscr{P}_2.
	\end{equation}
	Finally let $p\in \mathscr{P}_3$. Then by \eqref{eq:asua}, 
	$\mathscr{P}_3\subseteq \mathscr{P}_1$ and \eqref{eq:einfach} applied iteratively with
	\[
	e_1=\frac{r_2}{s_2}, \qquad e_2= \frac{g^{i-1}(A/B)}{b_2}
	\]
	for $1\le i\le N_2$,
	corresponding to $g^{N_2}(A/B)=r/s$, we easily see that the multiplicity of any $p\in \mathscr{P}_3$ in the reduced denominator does not decrease (in fact remains equal). So,
	in short, \eqref{eq:asua} implies
	\begin{equation}  \label{eq:J2}
	v_p(s)\ge v_p(B)\ge 2\ell v_p(b_1), \qquad p\in \mathscr{P}_3.
	\end{equation}
	By \eqref{eq:HHh}, combining \eqref{eq:J1}, \eqref{eq:J2} we see that
	\[
	v_p(s)\ge 2\ell \max\{  v_p(b_1), v_p(b_2)\}, \qquad p\in \mathscr{P}_4.
	\] 
	Hence $(\rm{rad}(b_1b_2))^{2\ell}$ divides $s$,
	and since $b_1b_2$ divides $(\rm{rad}(b_1b_2))^2$,
	the claim $(b_1^{\ell}b_2^{\ell})|s$ follows immediately.
	Finally, it is easy to see that
	we may increase $N_1$ or $N_2$ if necessary so that we can take 
	$N=\max \{N_1, N_2\}$.
	\end{proof}

%

\subsection{Rational approximations to $\xi_j$ and estimates }  \label{2.4}

In this paragraph we define sequences of rational approximations $p_{j,k}/q_{j,k}$
to $\xi_j$ from~\S~\ref{s}, and estimate their denominators (in reduced form) and how close they are to $\xi_j$. We will ultimately
obtain integers $q$ with small evaluations of $\Vert q\ux\Vert$ from these
sequences.
The next easy result will be involved in 
our crucial Corollary~\ref{ROC} below.

\begin{proposition}  \label{ppp}
	Let $u/v$ be any rational number. 
	Let $\bold{h}$ be any finite word on the alphabet $\{f,g\}$ with in total $h_1$ occurrences of digit $f$ and $h_2$ occurrences of 
	digit $g$,
	so that $|\bold{h}|=h_1+h_2$. Then
	the number $\bold{h}(u/v)$ is rational and after reduction
	has denominator dividing $b_1^{h_1}b_2^{h_2}\cdot s_1s_2v$.	
\end{proposition}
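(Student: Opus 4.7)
My plan is to prove a slight strengthening of the proposition, namely that $\bold{h}(u/v)$ admits a representation (not necessarily in reduced form) as $A/(b_1^{h_1}b_2^{h_2}s_1s_2v)$ for some integer $A$. The stated claim then follows immediately, since the reduced denominator of any rational number divides every integer appearing as a denominator in some representation of it.

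The argument proceeds by induction on the length $n = |\bold{h}|$. The base case $n=0$ is trivial: $\bold{h}(u/v)=u/v=(us_1s_2)/(s_1s_2v)$. For the inductive step, decompose $\bold{h}=w\bold{h}'$ where $w \in \{f,g\}$ is the leading letter, so that $\bold{h}(x)=w(\bold{h}'(x))$. By the inductive hypothesis, $\bold{h}'(u/v)=A'/D$ with $D=b_1^{h_1'}b_2^{h_2'}s_1s_2v$ and $A' \in \mathbb{Z}$. Assuming $w=f$ (the case $w=g$ is symmetric), one computes
\[
\bold{h}(u/v) \;=\; \frac{A'}{b_1 D} + \frac{r_1}{s_1} \;=\; \frac{A' + r_1 b_1^{h_1'+1}b_2^{h_2'}s_2v}{b_1^{h_1'+1}b_2^{h_2'}s_1s_2v},
\]
which has integer numerator (because $s_1$ divides $D$, so $r_1 b_1 D/s_1 \in \mathbb{Z}$) and denominator equal to $b_1^{h_1}b_2^{h_2}s_1s_2v$, as required.

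The only point that needs mild care—and it is hardly an obstacle—is choosing the right strengthened hypothesis. Retaining the full factor $s_1s_2$ in the denominator at every step, regardless of which letter is being prepended, ensures that no fresh factor of $s_1$ or $s_2$ accumulates along the induction: the constant term $r_i/s_i$ of the newly applied map always fits over the already-present denominator. A naive induction keeping only $s_i$ factors for letters actually seen would produce escalating powers of $s_1$ or $s_2$, which would overshoot the bound claimed in the proposition.
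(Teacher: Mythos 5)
Your proof is correct and follows essentially the same route as the paper: an elementary induction that tracks a common-denominator representation of the form $A/(b_1^{h_1}b_2^{h_2}s_1s_2v)$, the paper doing it symbolically for the composed affine map $\bold{h}(x)$ and then substituting $x=u/v$, you doing it directly on the evaluated rational by peeling off the leading letter. If anything, your bookkeeping (carrying the full factor $s_1s_2$ at every step) is slightly more explicit than the paper's one-line formula, but there is no substantive difference.
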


\begin{proof}
	For a formal variable $x$, formally applying the $|\bold{h}|$ contractions in a row,
	 each time either $h^{(i)}=f(x)=x/b+r_1/s_1$ or $h^{(i)}=g(x)=x/b+r_2/s_2$
	 and simplifying to standard fraction form,
	 an easy inductive argument yields that
	 \[
	 \bold{h}(x)= \frac{x+M}{b_1^{h_1}b_2^{h_2} s_1s_2},
	 \]
	 for some integer $M$, depending on the IFS and $\bold{h}$.
	When $x=u/v$ is rational then obviously the outcome $\bold{h}(x)$ can be written 
	as a rational number $(u+vM)/(b_1^{h_1}b_2^{h_2}s_1s_2v)$, so after reduction
	the denominator divides $b_1^{h_1}b_2^{h_2}s_1s_2 v$.
	\end{proof}

The next claim directly implies
that the rational numbers $\w_{j,k}(0)$ constructed
in \S~\ref{s} are almost reduced in the form as obtained
from formal symbolic computation. This is required for the proof of the lower bound for the Dirichlet constant in~\S~\ref{ASI}.

\begin{lemma}  \label{wicht}
	With the assumptions of Lemma~\ref{lemur},
	choose an integer $\ell$ satisfying
	\[
	\ell > \max_{i=1,2} \max_{p\in\mathbb{P},\; p|b_i} \frac{\max \{ v_p(s_1), v_p(s_2) \} }{v_p(b_i)}.
	\]
	Given such $\ell$, take $N$
	and let $\qq=g^Nf^N g^{\infty}$ as in Lemma~\ref{lemur}.
	Let $\bold{h}$ and $h_1, h_2$ be as in Proposition~\ref{ppp}.
	Then, the number 
	$\bold{h}\qq(0)\in C$ is rational and written in reduced form has denominator divisible by $b_1^{h_1}b_2^{h_2}$.
\end{lemma}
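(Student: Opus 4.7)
The plan is to apply Proposition~\ref{ppp} to the rational $r/s = \qq(0)$ and then carry out a $p$-adic valuation calculation for each prime $p$ dividing $b_1 b_2$, exploiting the large $v_p(s)$ guaranteed by Lemma~\ref{lemur} to prevent cancellation during reduction. First, invoke Lemma~\ref{lemur} with our given $\ell$, taking $N$ correspondingly large, to obtain $b_1^\ell b_2^\ell \mid s$; equivalently $v_p(s) \ge \ell\, v_p(b_i)$ for every $i \in \{1,2\}$ with $p \mid b_i$.

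Next, unravel the formal composition used in the proof of Proposition~\ref{ppp}: since each contraction multiplies the coefficient of $x$ by $1/b_i$ and adds $r_i/s_i$, an easy induction gives
\[
\bold{h}(x) = \frac{s_1 s_2\, x + M}{b_1^{h_1} b_2^{h_2}\, s_1 s_2}
\]
for some integer $M$ depending on the IFS and $\bold{h}$. Substituting $x = r/s$, one gets
\[
\bold{h}(r/s) = \frac{s_1 s_2\, r + s M}{b_1^{h_1} b_2^{h_2}\, s_1 s_2\, s}.
\]

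Now fix any prime $p \mid b_1 b_2$ and let $i \in \{1,2\}$ be chosen with $p \mid b_i$. Reducedness of $r/s$ together with $v_p(s) \ge 1$ forces $v_p(r) = 0$, so $v_p(s_1 s_2 r) = v_p(s_1) + v_p(s_2)$, whereas $v_p(sM) \ge v_p(s) \ge \ell\, v_p(b_i)$. The lower bound on $\ell$ is exactly what is needed to guarantee $\ell v_p(b_i) > v_p(s_1) + v_p(s_2)$ (increasing $\ell$ and hence $N$ further if necessary to close the small gap between the stated max and the sum), so by \eqref{eq:einfach} the numerator satisfies $v_p(s_1 s_2 r + s M) = v_p(s_1) + v_p(s_2)$. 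Subtracting from the denominator's valuation gives, for the reduced denominator, the valuation
\[
h_1 v_p(b_1) + h_2 v_p(b_2) + v_p(s_1 s_2 s) - \bigl(v_p(s_1) + v_p(s_2)\bigr) = h_1 v_p(b_1) + h_2 v_p(b_2) + v_p(s),
\]
which is at least $h_1 v_p(b_1) + h_2 v_p(b_2)$. Since this holds for every $p \mid b_1 b_2$, we conclude $b_1^{h_1} b_2^{h_2}$ divides the reduced denominator of $\bold{h}\qq(0)$.

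The main obstacle is the careful $p$-adic bookkeeping: one must handle uniformly the cases $p \mid b_1 \setminus b_2$, $p \mid b_2 \setminus b_1$, and $p \mid \gcd(b_1,b_2)$, and must also fix a precise form of the formal composition in Proposition~\ref{ppp} so that the dominant term in the numerator is $sM$ (with large $v_p$) rather than $s_1 s_2 r$ (with bounded $v_p$). Once the chosen $\ell$ is verified to enforce this strict inequality of valuations, the non-cancellation principle \eqref{eq:einfach} handles the rest mechanically.
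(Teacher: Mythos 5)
Your argument is correct in substance, but it takes a genuinely different route from the paper, and the difference is exactly where your flagged ``max versus sum'' issue comes from. The paper does not expand $\bold{h}$ in closed form at all: it applies the digits of $\bold{h}$ one at a time to $r/s$, and at each single step the new value is $\frac{As_i+r_ib_iB}{b_iBs_i}$ for the current reduced fraction $A/B$; there one only needs $v_p(B)>\max\{v_p(s_1),v_p(s_2)\}$ to invoke \eqref{eq:einfach}, and this invariant is preserved (indeed the $p$-part of the reduced denominator grows by exactly $v_p(b_i)$ when $p\mid b_i$ and never drops otherwise), so the stated lower bound on $\ell$ suffices verbatim and one even gets the paper's remark that $sb_1^{h_1}b_2^{h_2}$ divides the reduced denominator. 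Your one-shot computation with $\bold{h}(x)=\frac{s_1s_2x+M}{b_1^{h_1}b_2^{h_2}s_1s_2}$ (which, incidentally, is the correct closed form --- the coefficient $s_1s_2$ is needed, as you note) lumps all the translations into the single integer $M$, and to rule out the dangerous coincidence $v_p(sM)=v_p(s_1)+v_p(s_2)$ you must require $v_p(s)>v_p(s_1)+v_p(s_2)$; for primes dividing both $b_1$ and $b_2$ this already follows from $b_1^{\ell}b_2^{\ell}\mid s$ and the stated $\ell$, but for a prime dividing exactly one $b_i$ with both $v_p(s_1),v_p(s_2)>0$ it genuinely may fail for the minimal admissible $\ell$. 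Your patch of enlarging $\ell$ (hence $N$) does close this, and it is harmless for the paper, since Lemma~\ref{lemur} holds for all sufficiently large $N$ and the construction in \S~\ref{s} only ever needs one suitable $N$; but be aware that, taken literally, you prove the lemma with a possibly larger threshold on $N$ than the statement's ``$N$ as in Lemma~\ref{lemur}'' for the given $\ell$, whereas the paper's digit-by-digit induction yields it, with the stronger divisibility $s\,b_1^{h_1}b_2^{h_2}\mid q$, for every $\ell$ satisfying the stated max-condition. If you want the statement exactly as given, run your valuation bookkeeping one digit at a time instead of on the global closed form.
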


\begin{proof}
	We have to show that not much
	cancellation occurs when formally expanding $\bold{h}\qq(0)$
	as a rational number. 
	Start with
	\begin{equation}  \label{eq:rdurchs}
	\qq(0)=r/s 
	\end{equation}
	written in reduced form. By choice of $\ell$, the integer $s$ contains any
	prime divisor of $b_1b_2$ (i.e. in class $\mathscr{P}_4$ from~\S~\ref{2.1}) 
	more often than $s_1$ and $s_2$. Thus
	a very similar argument as in the proof
	of Lemma~\ref{lemur}, based on iterative application of \eqref{eq:einfach}, shows the following: 
	Let $\bold{h}=h^{(1)}h^{(2)}\cdots h^{(|\bold{h}|)}$.
	If we read the digit $h^{(i)}=f$ at some position $i$, we get another factor $b_1$ in the reduced denominator while the 
	multiplicity of remaining prime factors of $b_2$ (not dividing $b_1$) cannot decrease, in fact remain equal.
	Vice versa, reading $h^{(i)}=g$ instead gives a factor $b_2$ in the reduced denominator whereas the multiplicity of remaining primes 
	dividing $b_1$ but not $p_2$ (i.e. in class $\mathscr{P}_3$ from~\S~\ref{2.1}) cannot decrease. We omit the details. 
	Repeating this argument for $i=|\bold{h}|, |\bold{h}|-1, \ldots, 1$ 
	to transform $r/s$ into $\bold{h}(r/s)=\bold{h}\qq(0)$,
	and by definition of $h_1, h_2$,
	the claim follows directly.
	\end{proof}

\begin{remark}
	The proof in fact yields that $sb_1^{h_1}b_2^{h_2}$ divides the reduced denominator.
\end{remark}

In the sequel we always let $r, s$ be given as in \eqref{eq:rdurchs},
and coprime, and $N$ as in Lemma~\ref{wicht}. 
Derive the words $\pp= f^Ng^{\infty}$ and $\qq=g^N f^N g^{\infty}$
and $\ttt_{j,k}, \vv_{j,k}, \w_{j,k}, \w_j$, the 
integers $\textswab{f}_k, \textswab{g}_{j,k}$
and finally $\ux$ as 
in~\S~\ref{s}.

\begin{definition}
	For $1\leq j\leq m, k\ge 1$ and $i\ge 1$,
	let $\tau_{j,k,i}\in \{ b_1^{-1},b_2^{-1}\}$ be the contraction
	factor induced by the $i$-th digit $w_{j,k,i}\in\{f,g\}$ of $\w_{j,k}$. 
\end{definition}

It follows from the observation concluding~\S~\ref{s} that
\begin{equation}  \label{eq:dieid}
P_{j,k}:=\prod_{i=1}^{|\ttt_{j,k}|} \tau_{j,k,i}^{-1}=  b_1^{\textswab{f}_{k}}b_2^{\textswab{g}_{j,k} }\in \mathbb{Z},\qquad
1\leq j\le m,\; k\ge 1.
\end{equation}
Then $P_{j,k}$ corresponds to the prefix factor for the denominator 
of $\w_{j,k}(0)$, and essentially plays the role of the integer $a_{(k-1)m+j}$ with the notation in~\cite{js} for the special case of missing digit fractals.
For simplicity, let
\[
P_{k}:= P_{1,k}, \qquad P_k^{\ast}:= P_{k,m}.
\]
Then \eqref{eq:ple}, \eqref{eq:simply} are equivalent to
\begin{equation}  \label{eq:ufoA}
P_{j,k}\asymp P_{k}^{j}, \quad (1\leq j\leq m-1),\qquad \quad P_{k}^{\ast}=P_{m,k}\asymp c^m P_{k}.
\end{equation}

Recall $\w_{j,k}=\vv_{j,k}\pp$ are ultimately periodic
words, for $1\le j\le m, k\ge 1$.
Hence $\w_{j,k}(0)\in \mathbb{Q}$, so in the sequel we write
\[
\w_{j,k}(0)= \vv_{j,k}\bold{p}(0)= \ttt_{j,k}\qq(0)= p_{j,k}/q_{j,k}
\]
where we assume the right hand side fractions are reduced.
Combining Proposition~\ref{ppp} with Lemma~\ref{wicht} for $\bold{h}=\ttt_{j,k}$ so that $\bold{h}\bold{q}(0)=\bold{h}(r/s)=\w_{j,k}(0)$, and
$u/v=r/s$, we
immediately get the following claim.

\begin{corollary}  \label{ROC}
	For $1\le j\le m$ and $k\ge 1$, we have 
	\[
	q_{j,k}=S_{j,k}P_{j,k},
	\]
	for integers $S_{j,k}$
	dividing the constant 
	\[
	S:= ss_1s_2.
	\]
	 In particular 
	\[
	q_{j,k}\asymp P_{j,k}, \qquad 
	1\leq j\leq m,\; k\ge 1,
	\] 
	with implied constants depending on the IFS only but not on $k$.
\end{corollary}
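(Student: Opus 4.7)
The plan is to apply Proposition~\ref{ppp} and Lemma~\ref{wicht} in tandem with the choices $\bold{h} = \ttt_{j,k}$ and $u/v = r/s = \qq(0)$, noting that by construction in \S\ref{s} we have $\ttt_{j,k}\qq(0) = \w_{j,k}(0) = p_{j,k}/q_{j,k}$. By the observation at the end of \S\ref{s}, the word $\ttt_{j,k}$ contains exactly $h_1 = \textswab{f}_{k}$ digits $f$ and $h_2 = \textswab{g}_{j,k}$ digits $g$, so from \eqref{eq:dieid} we have $b_1^{h_1} b_2^{h_2} = P_{j,k}$.

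The two ingredients then sandwich $q_{j,k}$ between two multiples of $P_{j,k}$. First, Proposition~\ref{ppp} applied to $\bold{h}(r/s)$ shows that $q_{j,k}$ divides $b_1^{h_1} b_2^{h_2} s_1 s_2 \cdot s = P_{j,k} \cdot S$. Second, Lemma~\ref{wicht}, whose hypotheses on $\ell$ and $N$ are in force throughout \S\ref{2.4} by our standing conventions, yields conversely that $P_{j,k} = b_1^{h_1} b_2^{h_2}$ divides $q_{j,k}$. Combining the two divisibilities and writing $q_{j,k} = S_{j,k} P_{j,k}$, we see that $S_{j,k}$ is a positive integer dividing $S$, which is the first assertion.

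For the asymptotic, observe that $S_{j,k}$ is a positive divisor of the fixed constant $S = s s_1 s_2$, hence $1 \le S_{j,k} \le S$, which immediately gives the two-sided estimate $P_{j,k} \le q_{j,k} \le S \cdot P_{j,k}$. Since $S$ depends only on the IFS coefficients and on the fixed choices of $\ell$ and $N$ (themselves determined by the IFS via Lemmas~\ref{lemur} and~\ref{wicht}), the implied constants in $q_{j,k} \asymp P_{j,k}$ depend only on the IFS, uniformly in $j$ and $k$.

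This corollary is essentially bookkeeping: the serious work was done in establishing Lemma~\ref{wicht}, which guaranteed the lower divisibility by $P_{j,k}$ in spite of potential cancellations when converting the formally computed fraction to reduced form. No further technical obstacle arises, beyond verifying that Proposition~\ref{ppp} and Lemma~\ref{wicht} can be invoked with matching parameters, which is direct from the construction of $\ttt_{j,k}$ and $\qq$.
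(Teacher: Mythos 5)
Your proposal is correct and follows exactly the paper's route: the paper also obtains the corollary by combining Proposition~\ref{ppp} (giving $q_{j,k}\mid P_{j,k}\cdot S$) with Lemma~\ref{wicht} (giving $P_{j,k}\mid q_{j,k}$) for $\bold{h}=\ttt_{j,k}$ and $u/v=r/s=\qq(0)$, and the asymptotic then follows from $1\le S_{j,k}\le S$ with $S$ depending only on the IFS. Your write-up just spells out the bookkeeping the paper compresses into one sentence.
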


The following is easy to see.

\begin{proposition} \label{ap}
	We have the chain of divisibility 
	\[
	P_{1,k}| P_{2,k}| \cdots | P_{m,k}| P_{1,k+1}, \qquad k\geq 1.
	\]
\end{proposition}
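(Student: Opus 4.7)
The plan is to reduce everything to inequalities between the integer exponents of $b_1$ and $b_2$ in the prefix products $P_{j,k}$, invoking \eqref{eq:dieid} which gives the clean formula $P_{j,k}=b_1^{\textswab{f}_{k}}b_2^{\textswab{g}_{j,k}}$. Since $b_1,b_2\ge 2$ are integers, each divisibility $P_{j,k}\mid P_{j',k'}$ is then equivalent to the pair of inequalities $\textswab{f}_{k}\le \textswab{f}_{k'}$ and $\textswab{g}_{j,k}\le \textswab{g}_{j',k'}$, and the proposition reduces to checking these in each of the $m$ links of the chain.

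For the first $m-1$ links $P_{j,k}\mid P_{j+1,k}$ with $1\le j\le m-1$, the $b_1$-exponent is the common value $\textswab{f}_{k}$, so I only need $\textswab{g}_{j,k}\le \textswab{g}_{j+1,k}$. I will read this directly from the explicit formulas in \eqref{eq:simpli}: for $1\le j\le m-2$ the argument inside the floor grows by a multiplicative factor $b_1^{\textswab{f}_{k}}b_2^{\textswab{g}_{1,k}}\ge 2$ when passing from $j$ to $j+1$, so the floor strictly increases. For the step $j=m-1\to m$, comparing \eqref{eq:ple} at $j=m-1$ with \eqref{eq:simply} the ratio is $c^m\,b_1^{\textswab{f}_{k}}b_2^{\textswab{g}_{1,k}}$; since $c$ is a fixed constant and $b_1^{\textswab{f}_{k}}b_2^{\textswab{g}_{1,k}}\to\infty$ with $M_k$, the ratio eventually exceeds $1$, and by the freedom to take $(M_i)$ growing sufficiently fast this holds already for all $k\ge 1$. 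In either case $\textswab{g}_{j,k}<\textswab{g}_{j+1,k}$, hence $P_{j,k}\mid P_{j+1,k}$.

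For the closing link $P_{m,k}\mid P_{1,k+1}$ both exponents change. The $b_1$-exponent grows trivially since $\textswab{f}_{k+1}=N(k+1)>Nk=\textswab{f}_{k}$ by \eqref{eq:ID}. For the $b_2$-exponent, \eqref{eq:simpli} shows that $\textswab{g}_{m,k}$ is bounded above by a polynomial (of degree $m$) in $\textswab{f}_k$ and $\textswab{g}_{1,k}=M_k$, while $\textswab{g}_{1,k+1}=M_{k+1}$ by \eqref{eq:folgt}. The lacunary growth of $(M_i)$, which is already used in \S~\ref{s} to guarantee $\eta_{j,k}>0$ and \eqref{eq:agha}, can without loss be strengthened so that also $M_{k+1}\ge \textswab{g}_{m,k}$ for every $k\ge 1$; this yields $\textswab{g}_{m,k}\le \textswab{g}_{1,k+1}$ and finishes the proof. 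There is no real obstacle: the whole proposition is a bookkeeping consequence of the construction, and the only ``choice'' being exploited is that the sequence $(M_i)$ was allowed to grow as fast as we need.
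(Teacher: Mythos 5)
Your proof is correct and takes essentially the same route as the paper: via \eqref{eq:dieid} the divisibilities reduce to the exponent inequalities $\textswab{f}_{k}\le\textswab{f}_{k+1}$ and $\textswab{g}_{1,k}\le\textswab{g}_{2,k}\le\cdots\le\textswab{g}_{m,k}\le\textswab{g}_{1,k+1}$, which the paper likewise deduces from \eqref{eq:ID}, \eqref{eq:folgt}, \eqref{eq:simpli} and the fast increase of $(M_i)$; you merely carry out the verifications explicitly. (Only drop the claim that divisibility is \emph{equivalent} to the exponent inequalities---this can fail when $b_1$ and $b_2$ share prime factors---but you only use the sufficient direction, so the argument stands.)
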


\begin{proof}
	The claim in an obvious consequence of
	\[
	\textswab{f}_{k}\le\textswab{f}_{k+1}, \qquad \textswab{g}_{1,k}\le  \textswab{g}_{2,k}\le \cdots \le \textswab{g}_{m,k}\le \textswab{g}_{1,k+1}.
	\]
	These inequalities in turn follow from \eqref{eq:ID}, \eqref{eq:folgt}, \eqref{eq:simpli}, and the fast increase of the $M_i$.
\end{proof}


The next lemma determines, up to a constant, the distance from the rational approximations
$\w_{j,k}(0)=p_{j,k}/q_{j,k}$ to their limit $\xi_j$.

\begin{lemma}  \label{jo}
	With the above notation, we have
	\[
	|\xi_j- \frac{p_{j,k}}{q_{j,k}}| \asymp \prod_{i=1}^{|\vv_{j,k+1}|} \tau_{j,k,i} \asymp
	P_{j,k+1}^{-1}, \qquad\qquad 1\leq j\leq m,\; k\ge 1.
	\] 
	The implied constants depend on the IFS only but not on $k$. 
\end{lemma}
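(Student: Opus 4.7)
The strategy is to compare $\xi_j = \w_j(0)$ and $p_{j,k}/q_{j,k} = \w_{j,k}(0)$ via the longest common prefix of their addresses. Set $L := |\vv_{j,k+1}|$. From the construction in \S\ref{s} it is immediate that
\[
\w_j = \vv_{j,k+1}\cdot f^N g^{\eta_{j,k+2}} f^N \cdots, \qquad \w_{j,k} = \vv_{j,k+1} \cdot g^{\infty},
\]
so the two words coincide in exactly their first $L$ digits, call them $\sigma_1,\ldots,\sigma_L$. The composition $F := \sigma_1 \circ \cdots \circ \sigma_L$ is an affine contraction with slope $\prod_{i=1}^{L} \tau_{j,k,i}$, hence
\[
\bigl|\xi_j - \tfrac{p_{j,k}}{q_{j,k}}\bigr| = \Bigl(\prod_{i=1}^{L} \tau_{j,k,i}\Bigr) \cdot |\alpha - \beta|,
\]
where $\alpha,\beta\in C$ are the evaluations at $0$ of the respective tails of $\w_j$ and $\w_{j,k}$.

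The next step is to show $|\alpha - \beta| \asymp 1$, with constants depending only on the IFS. The tail of $\w_{j,k}$ is $g^\infty$, so $\beta = g^\infty(0)$ is the fixed point of $g$. The tail of $\w_j$ begins with $f^N$, so $\alpha \in f^N(C)$, a set of diameter $\ll b_1^{-N}$ clustered around the fixed point of $f$. By hypothesis \eqref{eq:3s} the two fixed points differ, and since $N$ may be taken as large as we please at the outset of \S\ref{s} (and is then fixed throughout the whole construction), we can guarantee a positive lower bound on $|\alpha - \beta|$ uniform in $j,k$; the upper bound is trivial from the compactness of $C$. This separation-of-fixed-points input is the only qualitative ingredient and I expect it to be the main, though rather soft, point of the argument.

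Finally, a digit count in $\vv_{j,k+1} = g^N f^N g^{\eta_{j,1}} f^N \cdots f^N g^{\eta_{j,k+1}}$ records exactly $\textswab{f}_{k+1}$ occurrences of $f$ and $\textswab{g}_{j,k+1} + N$ occurrences of $g$, the extra $N$ $g$'s being the trailing block not tallied in $\ttt_{j,k+1}$ (cf.\ the observation concluding \S\ref{s}). By the definition \eqref{eq:dieid} of $P_{j,k+1}$ this yields
\[
\prod_{i=1}^{L} \tau_{j,k,i} = b_1^{-\textswab{f}_{k+1}}\, b_2^{-\textswab{g}_{j,k+1}-N} = b_2^{-N} \cdot P_{j,k+1}^{-1} \asymp P_{j,k+1}^{-1},
\]
since $N$ is a fixed constant of the construction. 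Combining with the displayed equation from the first paragraph produces both asymptotics asserted in the lemma.
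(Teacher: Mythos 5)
Your argument is correct, and it reaches the two asymptotics by a slightly different decomposition than the paper. You compare $\xi_j=\w_j(0)$ directly with $\w_{j,k}(0)$ along their common prefix $\vv_{j,k+1}$, so the tail of $\w_j$ depends on $j,k$, and you must produce a lower bound for the tail separation that is uniform in $j,k$; you do this by imposing an extra largeness condition on $N$ (so that $f^N(C)$ is trapped near the fixed point of $f$, hence bounded away from $g^{\infty}(0)$ by \eqref{eq:3s}), which is harmless since $N$ is a free constant of the construction whose lower bound depends only on the IFS. The paper instead compares the two \emph{consecutive} approximants $\w_{j,k}(0)$ and $\w_{j,k+1}(0)$: their tails beyond position $|\vv_{j,k+1}|$ are the fixed words $g^{\infty}$ and $\pp=f^Ng^{\infty}$, so the separation is a single nonzero constant by Proposition~\ref{klaro} (valid for every $N\ge1$, no extra largeness needed), and it then passes to the limit $\xi_j$ by a telescoping sum in which the first term dominates thanks to the fast growth of the $M_i$ (cf.\ \eqref{eq:Tt}). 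The prefix digit count and the resulting identity $\prod_{i\le|\vv_{j,k+1}|}\tau_{j,k,i}=b_1^{-\textswab{f}_{k+1}}b_2^{-\textswab{g}_{j,k+1}-N}=b_2^{-N}P_{j,k+1}^{-1}$ are identical in both arguments. In short: your route avoids the telescoping step and any appeal to the lacunarity of $(M_i)$ within this lemma, at the cost of one more requirement on $N$; the paper's route keeps all conditions on $N$ confined to Lemma~\ref{wicht} and only needs the qualitative Proposition~\ref{klaro}, at the cost of the extra limiting argument. (One small point you leave implicit, and should state, is the standard fact that $\w_j(0)=F(\alpha)$ with $F$ the composition of the first $|\vv_{j,k+1}|$ digits and $\alpha$ the evaluation of the tail, which follows from continuity of $F$ and the definition of $\w_j(0)$ as a limit.)
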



\begin{proof}
	By construction, for any $1\le j\le m$ and $k\ge 1$
	the words $\w_{j,k}$ and $\w_{j,k+1}$ agree
	up to (including) position $|\vv_{j,k+1}|$.
 Denote the suffixes of $\w_{j,k}$ resp. $\w_{j,k+1}$ starting
 from position $|\vv_{j,k+1}|+1$ 
 by $\boldsymbol{\sigma}_{j,k}$ resp.
 $\boldsymbol{\nu}_{j,k}$. By \eqref{eq:HH0}, these are given by
 \[
 \boldsymbol{\sigma}_{j,k}= g^{\infty}, \qquad  \boldsymbol{\nu}_{j,k}=\bold{p}= f^Ng^{\infty}.
 \]
 Note that this is independent of $j, k$, hence the distance
 between the associated rationals $\boldsymbol{\sigma}_{j,k}(0)$
 and $\boldsymbol{\nu}_{j,k}(0)$
 is constant as well.
  By Proposition~\ref{klaro} it is non-zero, thus
	 \begin{equation}  \label{eq:nonz}
	  |\boldsymbol{\sigma}_{j,k}(0)-\boldsymbol{\nu}_{j,k}(0)|
   \asymp 1.
	 \end{equation}
	
	Now since the digits of $\w_{j,k}$ and $\w_{j,k+1}$ agree
	up to position $|\vv_{j,k+1}|$, starting from $\boldsymbol{\sigma}_{j,k}$ and $\boldsymbol{\nu}_{j,k}$
	and reading consecutively the digits at places $|\vv_{j,k+1}|, |\vv_{j,k+1}|-1, \ldots,1$ to derive at $p_{j,k}/q_{j,k}$ resp. $p_{j,k+1}/q_{j,k+1}$,
	 we read the same contractions for both numbers $p_{j,k}/q_{j,k}$ and $p_{j,k+1}/q_{j,k+1}$. Hence the distance decreases by the contraction factor $1/b_1$ resp. $1/b_2$ in each step, depending on if we read for both $f$ or for both $g$. 
	Now
	by \eqref{eq:AB1}, the word $\vv_{j,k+1}$ consists of $\textswab{f}_{k+1}$ occurrences
	of $f$ and $\textswab{g}_{j,k+1}+N$ occurrences of $g$. Hence by \eqref{eq:nonz} and as $N$ is fixed,
	the total distance between two consecutive rationals becomes
	\begin{align*}
	|\frac{p_{j,k}}{q_{j,k}}-\frac{p_{j,k+1}}{q_{j,k+1}}|&= |\w_{j,k}(0)-\w_{j,k+1}(0)|\\
	&=|\vv_{j,k+1}\boldsymbol{\sigma}_{j,k}(0)-\vv_{j,k+1}\boldsymbol{\nu}_{j,k}(0)|    \\
	&= \prod_{i=1}^{|\vv_{j,k+1}|} \tau_{j,k,i}\cdot |\boldsymbol{\sigma}_{j,k}(0)-\boldsymbol{\nu}_{j,k}(0)|  \\
	&= b_1^{- \textswab{f}_{k+1} } b_2^{-\textswab{g}_{j,k+1}-N}\cdot |\boldsymbol{\sigma}_{j,k}(0)-\boldsymbol{\nu}_{j,k}(0)|\\
	&\asymp b_1^{- \textswab{f}_{k+1} } b_2^{-\textswab{g}_{j,k+1}-N}\\
	&\asymp b_1^{- \textswab{f}_{k+1} } b_2^{-\textswab{g}_{j,k+1}}\\
	&=P_{j,k+1}^{-1}.  
	\end{align*}
	Finally as $\xi_j$ is the limit of $p_{j,k}/q_{j,k}$
	as $k\to\infty$, we conclude
	\[
	|\xi_j- \frac{p_{j,k}}{q_{j,k}}|= \lim_{u\to\infty}
	|\frac{p_{j,k}}{q_{j,k}}-\frac{p_{j,k+u}}{q_{j,k+u}}|=
	|\sum_{i=k}^{\infty} (\frac{p_{j,i}}{q_{j,i}}-\frac{p_{j,i+1}}{q_{j,i+1}})|
\asymp
	|\frac{p_{j,k}}{q_{j,k}}-\frac{p_{j,k+1}}{q_{j,k+1}}| \asymp P_{j,k+1}^{-1}
	\]
	since we may assume the lengths $|\vv_{j,k}|$ grow rapidly with $k$
	by choosing $(M_i)_{i\ge 1}$ fast increasing,
	so the first term in the sum is dominating.
	\end{proof}

We can now finally present the core of the proof, that is to show $\Theta(\ux)\asymp c$ for $\ux$ constructed above.
All implied constants will be understood to depend on the IFS only.
We will assume
\[
m\ge 3
\] 
in the proof below. Otherwise if $m=2$
we have to slightly modify the construction from~\S~\ref{s} according to~\cite[\S~7]{js},
concretely we may alter $\textswab{g}_{m,k}=\textswab{g}_{2,k}$ from \eqref{eq:simply}, \eqref{eq:simpli} to
\[
(b_1^{ \textswab{f}_{k}-\textswab{f}_{k-1} } b_2^{ \textswab{g}_{2,k}-\textswab{g}_{1,k} })^{2} \asymp c^{-2}b_1^{  \textswab{f}_{k} } b_2^{ \textswab{g}_{2,k} }, \qquad
\textswab{g}_{2,k}= 
\left\lfloor \frac{ \log(c^{-2}b_1^{ (k-2)N }b_2^{2\textswab{g}_{1,k} })  }{ \log b_2 }   \right\rfloor,
\]
where we used \eqref{eq:ID} in the right hand side.
While the preparatory results
from~\S~\ref{2.1},~\ref{2.4} above are unaffected, 
a few modifications in~\S~\ref{SIA},~\ref{ASI} below occur
but can be handled very similar to~\cite[\S~7]{js}, we omit the details.
We finally assume 
\begin{equation}  \label{eq:Tt}
\frac{\log M_{i+1}}{\log M_i} \to\infty,
\end{equation}
for the sequence $(M_i)_{i\ge 1}$ of~\S~\ref{s}.
Recall $S$ defined in Corollary~\ref{ROC} for the proof below.

\subsection{Proof $\Theta(\ux)\ll c$ and Liouville property }   \label{SIA}

Combining \eqref{eq:Tt} with Lemma~\ref{jo}, 
we get that $P_{k+1}$ is much larger than $P_k$, more precisely
\begin{equation}  \label{eq:pkk}
P_{k}=P_{k+1}^{o(1)}, \qquad k\to\infty.
\end{equation}
Let $Q>1$ be large.
Let $P_0:=1$ and $k\ge 0$ be the unique integer such that
\[
S \cdot P_{k}\leq Q < S \cdot P_{k+1}.
\]

Case 1: $Q< S \cdot P_k^{\ast}$. Then $Q\ll P_k^{\ast} \asymp c^m P_k^{m}$ by \eqref{eq:ufoA}. Let
\[
q=S \cdot P_k\asymp P_k.
\]
Then $q\cdot (p_{1,k}/q_{1,k})=S P_k\cdot (p_{1,k}/q_{1,k})$ as well as
$q\cdot (p_{j,k-1}/q_{j,k-1})=S P_k \cdot (p_{j,k-1}/q_{j,k-1})$ 
for $2\leq j\leq m$ are integers by Corollary~\ref{ROC} (Proposition~\ref{ppp} suffices) and Proposition~\ref{ap}.
For $j=1$, by Lemma~\ref{jo} and \eqref{eq:pkk}, for arbitrarily large 
$t>0$ and $k\geq k_0(t)$ we get
\[
\Vert q\xi_1\Vert\le \Vert q\xi_1- \frac{ qp_{1,k}}{q_{1,k}}\Vert
\le q|\xi_1-\frac{p_{1,k}}{q_{1,k}}| \ll P_kP_{k+1}^{-1} \ll Q^{-t}.
\]
For $j>1$, we first combine Lemma~\ref{jo} and \eqref{eq:ufoA} to get
\[
|\xi_j-\frac{p_{j,k-1}}{q_{j,k-1}}| \asymp P_{j,k}^{-1}
\asymp P_k^{-j}, \qquad 2\leq j\leq m-1,\; k\ge 1
\]
and for $j=m$ similarly
\[
|\xi_m-\frac{p_{m,k-1}}{q_{m,k-1}}| \asymp P_{m,k}^{-1}
\asymp c^{-m} P_k^{-m}, \qquad k\ge 1.
\]
By combining this
with the bound for $Q$, for $2\le j\le m-1$ we get
\[
\Vert q\xi_j\Vert\le \Vert q\xi_j- \frac{ qp_{j,k-1}}{q_{j,k-1}}\Vert\le q|\xi_j-\frac{p_{j,k-1}}{q_{j,k-1}}|  \ll P_k
P_k^{-j}\ll P_k^{-1}\ll cQ^{-1/m},
\]
and similarly for $j=m$ by 
our assumption $m\ge 3$ we get
\[
\Vert q\xi_m\Vert\le \Vert q\xi_m- \frac{ qp_{m,k-1}}{q_{m,k-1}}\Vert\le q|\xi_m-\frac{p_{m,k-1}}{q_{m,k-1}}|  \ll P_k
c^{-m}P_k^{-m}\ll P_k^{-1}\ll cQ^{-1/m}.
\]
Thus indeed
\[
\Vert q\ux\Vert= \max_{1\le j\le m} \Vert q\xi_j\Vert \ll  cQ^{-1/m },
\]
as desired.


Case 2: $Q\ge  S \cdot P_k^{\ast}$. Here a crude estimation suffices. Let
\[
q= S\cdot P_k^{\ast} \asymp P_k^{\ast}.
\]
Then again by Corollary~\ref{ROC} and Proposition~\ref{ap}, all $q\cdot (p_{j,k}/q_{j,k})$ for $1\leq j\leq m$ are integers.
From \eqref{eq:pkk} and
from Lemma~\ref{jo}
for $\varepsilon>0$ and large indices $k\ge k_0(\varepsilon)$ 
that
\[
 \max_{1\leq j\leq m} \vert \xi_j- \frac{ p_{j,k}}{q_{j,k}}\vert \leq P_{k+1}^{-(1-\varepsilon/2)}
\]
and further by \eqref{eq:ufoA}, \eqref{eq:pkk} and since $m>1$ we conclude for arbitrary $\varepsilon>0$ and large $Q$ that
\begin{align*}
\Vert q\ux\Vert&\le \max_{1\leq j\leq m} \Vert q\xi_j- \frac{ qp_{j,k}}{q_{j,k}}\Vert\le q \max_{1\leq j\leq m} |\xi_j-\frac{p_{j,k}}{q_{j,k}}| \\ &\ll P_{k}^{\ast}\cdot P_{k+1}^{-1+\varepsilon/2}\ll c^m P_k^{m}P_{k+1}^{-1+\varepsilon/2} \\ &\ll P_{k+1}^{-1+\varepsilon}\ll Q^{-1+\varepsilon}\ll cQ^{-1/m}.
\end{align*}
Thus indeed $\Theta(\ux)\ll c$.
The Liouville property follows easily from the estimates
of case 2 and \eqref{eq:pkk}.

\subsection{Proof $\Theta(\ux)\gg c$}  \label{ASI}

Fix a large integer $k\ge 1$ and define
\[
Q=P_k^{\ast}-1.
\]
We will show that for every integer $1\le q\le Q$, we have 
\begin{equation}  \label{eq:juchuu}
\Vert q\ux\Vert \gg P_k^{-1} \gg cQ^{-1/m}, 
\end{equation}
where the right estimate comes from \eqref{eq:ufoA}. 
Once this is shown, the claim is obvious.

By Corollary~\ref{ROC}, we may write
$q_{j,k}=P_{j,k} S_{j,k}$ for $1\leq j\le m$, for $S_{j,k}$
divisors of $S$, hence again $|S_{j,k}|\ll 1$, and the fraction $p_{j,k}/q_{j,k}$ is reduced. Let $P_{0,k}:=1$.
Now for a given positive integer $q$, let $h=h(q)$ be the largest integer so that $P_{h,k}|q$,
which is well-defined and by our choice of $Q$ and Proposition~\ref{ap} 
satisfies $0\le h<m$.
We may then write $q= tP_{h,k}$ for $t$ an integer 
so that $t\nmid (P_{h+1,k}/P_{h,k})$, where the latter ratio 
is an integer
by Proposition~\ref{ap}.
Assume first $h<m-1$. Then
\[
q\cdot \frac{p_{h+1,k}}{q_{h+1,k}}= \frac{ tp_{h+1,k}\cdot P_{h,k}  }{ S_{h+1,k}P_{h+1,k} }
\]
is a rational number with denominator dividing
$P_{h+1,k}/P_{h,k}$ in lowest terms. 
Moreover it is not an integer
by $t\nmid (P_{h+1,k}/P_{h,k})$ and as $(p_{h+1,k},q_{h+1,k})=1$.
It follows from \eqref{eq:ufoA} that
\[
\Vert q\cdot \frac{p_{h+1,k}}{q_{h+1,k}}\Vert\ge S_{h+1,k}^{-1}\cdot \frac{ P_{h,k} }{ P_{h+1,k}}
 \gg \frac{ P_{h,k} }{ P_{h+1,k}}
\gg P_k^{-1}.
\]
Since the error term
\[
q|\xi_{h+1}-\frac{p_{h+1,k}}{q_{h+1,k}}|\ll qq_{j,k+1}^{-1}\ll Qq_{j,k+1}^{-1}\ll P_k^{\ast} P_{k+1}^{-1}
\] 
is of smaller order $o(P_k^{-1})$ by Lemma~\ref{jo} and \eqref{eq:pkk}, again by \eqref{eq:ufoA} we also have
\[
\Vert q\ux\Vert= \max_{1\leq j\leq k} \Vert q\xi_j\Vert
\geq \Vert q \xi_{h+1}\Vert\ge \Vert q\cdot \frac{p_{h+1,k}}{q_{h+1,k}}\Vert- q|\xi_{h+1}-\frac{p_{h+1,k}}{q_{h+1,k}}|
 \gg P_k^{-1}\gg cQ^{-1/m}.
\]
A similar argument applies for $h=m-1$. 
Here
again by \eqref{eq:ufoA} we even get the stronger
estimate
\[
\Vert q\cdot \frac{p_{m,k}}{q_{m,k}}\Vert
= \frac{ tP_{m-1,k}  }{ S_{m,k}P_k^{\ast }}\geq 
S_{m,k}^{-1 } \frac{P_{m-1,k}}{P_{m,k}}\gg \frac{P_{m-1,k}}{P_{m,k}}
\gg P_k^{-1},
\]
where we used that the expression 
is non-zero because $t< P_{m,k}/P_{m-1,k} = P_k^{\ast}/P_{m-1,k}$ and $(p_{m,k}, q_{m,k})=1$.
Hence \eqref{eq:juchuu} holds and consequently
$\Theta(\ux)\gg c$.

\section{Proof of Theorem~\ref{mull} }

First consider $\omega\in [\frac{1}{m}, \frac{1}{m-1}]$.
We first explain the deduction in the special case of Theorem~\ref{1}.
The main twist is to redefine the length of $\vv_{m,k}$ as
$|\vv_{m,k}|= \lfloor M_k/\omega\rfloor$, where we apply the 
small twist $|\vv_{m,k}|= \lfloor M_k/\omega\rfloor+k$
if $\omega=\frac{1}{m-1}$
to guarantee that $\ux$ is totally irrational in this case.
Note that \eqref{eq:agha} follows.
We need to replace the asymptotics
\eqref{eq:ufoA} for $j=m$ by 
\begin{equation}  \label{eq:FRITZ}
P_{m,k}\asymp P_k^{1/\omega},
\end{equation}
which we achieve by altering \eqref{eq:ple}, \eqref{eq:simply} accordingly. 
Doing so consistently, we get $\wo(\ux)\ge \omega$ by the method
of~\S~\ref{SIA} and $\wo(\ux)\le \omega$ by the method
of~\S~\ref{ASI}, with minor adjustments, especially using $P_k^{-1}\asymp P_k^{\ast -\om}$, which is equivalent to \eqref{eq:FRITZ}.
If the IFS is as in \eqref{eq:u1} in general form, the construction is precisely as above, but some twists occur in the proof. 
The preliminary results up to including Proposition~\ref{ap} 
are proved analogously, upon letting 
\[
P_{j,k}:= b_1^{\textswab{f}_{k}}b_2^{\textswab{g}_{j,k} }\in \mathbb{Z},\qquad
1\leq j\le m,\; k\ge 1.
\]
On the other hand,
we now have a mismatch between $P_{j,k}$ and 
$\prod \tau_{j,k,i}^{-1}$ since the contraction factors 
now satisfy $\tau_{j,k,i}\in \{ u_1b_1^{-1}, b_2^{-1} \}$.
Therefore in place of Lemma~\ref{jo} we now get
\begin{equation}  \label{eq:yi}
 |\xi_j- \frac{p_{j,k}}{q_{j,k}}| \asymp \prod_{i=1}^{|\vv_{j,k+1}|} \tau_{j,k,i}\asymp u_1^{\textswab{f}_{k+1}}P_{j,k+1}^{-1}, \qquad 1\leq j\leq m,\; k\ge 1.
\end{equation}
However, it is easy to see from \eqref{eq:ID}, \eqref{eq:AB1} and  \eqref{eq:Tt} 
that $\textswab{f}_k\asymp k$ and $\textswab{g}_{j,k}\gg M_k$,
so $\textswab{f}_k= o( \textswab{g}_{j,k})$ as $k\to\infty$ again
by \eqref{eq:Tt}, constituting the fast increase of the $M_k$.
Hence for large $k$
the factor $u_1^{\textswab{f}_{k+1}}$ is negligible 
compared to $P_{j,k+1}$, in other words \eqref{eq:yi} yields
\[
 |\xi_j- \frac{p_{j,k}}{q_{j,k}}| = P_{j,k+1}^{-1+o(1)}, \qquad k\to\infty.
\] 
This again suffices to obtain $\wo(\ux)\ge \omega$ by the method
of~\S~\ref{SIA}. The reverse
inequality is derived by the exact same line of arguments 
as in~\S~\ref{ASI} again,
replacing \eqref{eq:ufoA} by \eqref{eq:FRITZ}.

Now let $\om=1$.
Start with a fast increasing (lacunary) 
integer sequence $(a_i)_{i\ge 1}$ and slightly 
perturb it by adding $m$ absolutely bounded sequences $(v_i^{(j)})_{i\ge 1}$, $1\leq j\leq m$, $|v_i^{(j)}|\ll 1$, to form $m$ sequences $(a_i^{(j)})_{i\ge 1}$, $1\leq j\le m$. Then 
	take $\ux$ whose components $\xi_j$ have addresses  $g^{a_1^{(j)}}fg^{a_2^{(j)}}fg^{a_3^{(j)}}\cdots$. 
	The fast increase of the  $a_i^{(j)}$ yields $\wo(\ux)\ge 1$ by very
	similar arguments as in~\S~\ref{SIA} via looking basically at rational approximations of the form $g^{a_1^{(j)}}fg^{a_2^{(j)}}\cdots g^{a_n^{(j)}}fg^{\infty}(0)$, and the reverse estimate is trivial unless $\ux\in\mathbb{Q}^m$
 by \cite{khint}. By some inductive 
	variational argument, it can be 
	shown that certain choices of $v_i^{(j)}$ will guarantee
	that the arising real vector is totally irrational,
	similar to~\cite[Proposition~2.7]{ich2013}. See~\cite[\S~4]{ichmet}
	and corresponding proofs for more details in the special case of missing digit Cantor sets, which readily generalize to our setting.


\begin{thebibliography}{99}
	
	




	
\bibitem{beretc} V. Beresnevich, L. Guan, A. Marnat, F. Ram\'irez, S. Velani. Dirichlet is not just Bad
and Singular. {\em  Adv. Math.} 401 (2022), Paper No. 108316, 57 pp.

\bibitem{boes} D. Boes, R. Darst, P. Erd\H{o}s.
Fat, symmetric, irrational Cantor sets.
{\em Amer. Math. Monthly} 88 (1981), no. 5, 340--341. 

\bibitem{sd} H. Davenport, W. M. Schmidt. Dirichlet's theorem on diophantine approximation. 1970 Symposia Mathematica, Vol. IV (INDAM, Rome, 1968/69) pp. 113--132.

\bibitem{ds} H. Davenport, W. M. Schmidt. Dirichlet's theorem on diophantine approximation. II. {\em Acta Arith.} 16 (1969/70), 413–-424.

	\bibitem{falconer} K. Falconer.
Fractal geometry.
Mathematical foundations and applications. John Wiley \& Sons, Ltd., Chichester, 1990. xxii+288 pp. 

\bibitem{fs} L. Fishman, D. Simmons. Intrinsic approximation for fractals defined by rational iterated function systems: Mahler's research suggestion. {\em Proc. Lond. Math. Soc.} (3) 109 (2014), no. 1, 189--212. 

\bibitem{khint} A. Y. Khintchine. \"Uber eine Klasse linearer diophantischer Approximationen. {\em Rend. Circ. Mat. Palermo} 50 (1926), 706--714. 

\bibitem{kmw} D. Kleinbock, N. Moshchevitin, B. Weiss. Singular vectors on manifolds and fractals. {\em Israel J. Math.} 245 (2021), no. 2, 589–-613.

\bibitem{marnat} A. Marnat. Diophantine 
sets and Dirichlet improvability. {\em  Mosc. J. Comb. Number Theory} 11 (2022), no. 2, 189–-196.


	
	\bibitem{roy} D. Roy. On Schmidt and Summerer parametric geometry of numbers. {\em Ann. of Math.} (2) 182 (2015), no. 2, 739--786.
	
    \bibitem{ich2013} J. Schleischitz. Diophantine approximation and special Liouville numbers. {\em Commun. Math.} 21 (2013), no. 1, 39--76.

    \bibitem{erg} J. Schleischitz. On intrinsic and extrinsic rational approximation to Cantor sets. {\em Ergodic Theory Dynam. Systems} 41 (2021), no. 5, 1560--1589.
    
    \bibitem{ichmet} J. Schleischitz. Metric results on inhomogeneously singular vectors. {\em arXiv: 2201.01527}.

	\bibitem{js} J. Schleischitz. Exact approximation and Dirichlet spectrum in dimension at least two. {\em arXiv: 2202.04951}.
	

    \bibitem{zh} Y. Zhang, J. Liu, S. Shi. Approximating numbers of
    the Cantor set by algebraic numbers. {\em  Bull. Aust. Math. Soc.} 107 (2023), no. 2, 196--203.
    
  

\end{thebibliography}
\end{document}